\numberwithin{equation}{section}
\journal{?}
\begin{document}

\newtheorem{definition}{Definition}
\newtheorem{lemma}{Lemma}
\newtheorem{remark}{Remark}
\newtheorem{theorem}{Theorem}
\newtheorem{proposition}{Proposition}
\newtheorem{assumption}{Assumption}
\newtheorem{example}{Example}
\newtheorem{corollary}{Corollary}
\def\ep{\varepsilon}
\def\Rn{\mathbb{R}^{n}}
\def\Rm{\mathbb{R}^{m}}
\def\E{\mathbb{E}}
\def\hte{\hat\theta}
\newcommand{\eps}{\varepsilon}

\renewcommand{\b}{\beta}
\newcommand{\EX}{{\Bbb{E}}}
\newcommand{\PX}{{\Bbb{P}}}

\newcommand{\lam}{\lambda}
\newcommand{\s}{\sigma}
\renewcommand{\k}{\kappa}
\newcommand{\p}{\partial}
\newcommand{\D}{\Delta}
\newcommand{\om}{\omega}
\newcommand{\Om}{\Omega}
\renewcommand{\phi}{\varphi}
\renewcommand{\a}{\alpha}
\renewcommand{\b}{\beta}
\newcommand{\N}{{\mathbb N}}
\newcommand{\R}{{\mathbb R}}
\newcommand{\tr}{\triangle}

\newcommand{\de}{{\rm d}}
\newcommand{\dx}{{\rm d}x}
\newcommand{\dy}{{\rm d}y}
\newcommand{\dz}{{\rm d}z}
\newcommand{\dt}{{\rm d}t}
\newcommand{\delt}{\bigtriangleup t}

\newcommand{\B}{\mathcal B}
\newcommand{\Ro}{\mathbb R\setminus \{0\}}

\newcommand{\bess}{\begin{eqnarray*}}
\newcommand{\eess}{\end{eqnarray*}}

\newcommand{\befig}{\begin{figure}}
\newcommand{\enfig}{\end{figure}}

\newcommand{\bear}{\begin{eqnarray}}
\newcommand{\enar}{\end{eqnarray}}

\newtheorem{lem}{\textbf{\ \ \quad Lemma}}[section]
\newtheorem{col}{\textbf{\ \ \quad Corollary}}[section]
\newtheorem{prop}{\textbf{\ \ \quad Proposition}}[section]
\newtheorem{defi}{\textbf{\ \ \quad Definition}}[section]

\begin{frontmatter}

\title{{\bf Numerical analysis and applications of Fokker-Planck equations for stochastic dynamical systems with multiplicative $\alpha$-stable noises }}
\author{\centerline{\bf Yanjie Zhang$^{a,}\footnote{ zhangyj18@scut.edu.cn}$,
Xiao Wang$^{b,*}\footnote{corresponding author: xwang@vip.henu.edu.cn}$,
Qiao Huang$^{c,}\footnote{hq932309@alumni.hust.edu.cn}$,
Jinqiao Duan$^{d,}\footnote{duan@iit.edu}$
and
Tingting Li$^{b,}\footnote{ltt1120@mail.ustc.edu.cn}$}
\centerline{${}^a$ School of Mathematics,}
\centerline{South China University of Technology, Guangzhou 510641,  China}
\centerline{${}^b$ School of Mathematics and Statistics,} \centerline{Henan University, Kaifeng 475001, China}
\centerline{${}^c$ Center for Mathematical Sciences \& School of Mathematics and Statistics,}
\centerline{Huazhong University of Sciences and Technology, Wuhan 430074,  China}
\centerline{${}^d$ Department of Applied Mathematics,} \centerline{Illinois Institute of Technology, Chicago, IL 60616, USA}}

\begin{abstract}
The Fokker-Planck equations (FPEs) for stochastic systems driven by additive symmetric $\alpha$-stable noises may not adequately describe the time evolution for the probability densities of solution paths in some practical applications, such as hydrodynamical systems, porous media, and composite materials. As a continuation of previous works on additive case, the FPEs for stochastic dynamical systems with multiplicative symmetric $\alpha$-stable noises are derived by the adjoint operator method, which satisfy the nonlocal partial differential equations. A finite difference method for solving the nonlocal Fokker-Planck equation (FPE) is constructed, which is shown to satisfy the discrete maximum principle and to be convergent. Moreover, an example is given to illustrate this method. For asymmetric case,  general finite difference schemes are proposed, and some analyses of the corresponding numerical schemes are given. Furthermore, the corresponding result  is successfully applied  to the  nonlinear filtering problem.
\end{abstract}

\begin{keyword}
Numerical analysis, $\alpha$-stable process, nonlocal Fokker-Planck equation, Zakai equation.
\end{keyword}

\end{frontmatter}

%\linenumbers

\section{Introduction}

\par
The random fluctuations in nonlinear dynamical systems are usually non-Gaussian, such as in geosciences \cite{Dit}, biosciences \cite{Bli, Lin} and physical science \cite{Yong17, Ale} . There are experimental demonstrations of L\'evy fluctuations in optimal search theory and option pricing problem. Humphries et al. \cite{Hum} used maximum-likelihood methods to test for L\'evy patterns in relation to environmental gradients in the largest animal movement data set. They found L\'evy behaviour to be associated with less productive waters (sparser prey) and Brownian movements to be associated with productive shelf or convergence-front habitats (abundant prey). Jackson et al. \cite{Jac} presented a new efficient transform approach for regime-switching L\'evy models which was applicable to a wide class of path-dependent options and options on multiple assets, such as Bermudan, barrier, and shout options. Milovanov and Rasmussen \cite{Aj18} formulated the problem of confined L\'evy flight on a comb. They found that the L\'evy flight could be confined in the sense of generalized central limit theorem.  From that point of view, L\'evy noises are appropriate models for a class of important non-Gaussian processes with flights or bursts, which may be characterized by divergent moments.  When the dynamics become strongly intermediate, the $\alpha$-stable noises often occur.

A large number of investigations focused on the dynamical behaviors of stochastic differential equations subject to additive $\alpha$-stable  noises, including stochastic resonance \cite{Wan16}, coherence \cite{Zqw} and stochastic basins of attraction \cite{Ls}. Djeddi and Benidir \cite{Md} investigated the effect of additive impulsive noise modelled by $\alpha$-stable distributions on the Wigner-Ville Distribution and polynomial Wigner-Ville Distribution in the case of polynomial phase signals. Freitas et al. \cite{Ml} derived the capacity bounds for additive symmetric $\alpha$-stable noise channels. Mahmood et al. \cite{Am}  discussed and analyzed features of a good communications receiver for single-carrier modulation in additive impulsive noise. They showed that the conventional (continuous-time) receiver performed poorly in non-Gaussian additive white symmetric $\alpha$-stable noise in the It\^o sense.  However, in many practical applications, the additive $\alpha$-stable noise may not adequately describe the fluctuations of the stochastic force directly depend on the state of the system,  such as hydrodynamical systems, porous media, and composite materials. The stochastic descriptions of such systems must include a dependence on the process variable (the multiplicative noise) \cite{Ts10, Ts12, Ts13}.

Given the nonlinear stochastic dynamical systems which are driven by the multiplicative $\alpha$-stable noises, one of the main tasks is to quantify how uncertainly propagates and evolves. A popular method is to obtain the probability density function (PDF) of the solution paths, which contains the complete statistical information about the uncertainty and describes the change of probability of a stochastic process in space and time.  The PDF of the solution paths is governed by space fractional FPE. There are explicit formulae to obtain the associated fractional {FPE} for the stochastic differential equations (SDEs) excited by additive $\alpha$-stable noises \cite{Apple, Yxw19, Sune}. Similarly, for SDEs excited by the multiplicative $\alpha$-stable noises, Schertzer et al. \cite{ds} derived a fractional FPE by the methods of first and second characteristic functions. Sun et al. \cite{Sun17}  derived the FPE by the adjoint operator of the infinitesimal generators of Markov processes in the sense of Marcus. Moreover, they also implied  that obtaining the adjoint operators of the infinitesimal generators of It\^o SDEs with multiplicative $\alpha$-stable noises was still an open problem. Chechkin  et al. \cite{AV, AJ14,AJ18} derived the fractional FPE by the idea of transition probability in reciprocal space. Denisov et al. \cite{De09} derived the fractional FPE  by using a two-stage averaging procedure, which was associated with the Langevin equation.

For the fractional FPE, due to the existence of the nonlocal term, exact solutions can be obtained only for some special additive $\alpha$-stable noises with some restricted conditions \cite{Duan}. Therefore, some numerical schemes were developed to solve these types of equations. For example, Gao et al. \cite{Ting} proposed a fast and accurate finite difference method to simulate the nonlocal FPE on either a bounded or infinite domain. Meerschaert and Tadjeran  \cite{Mct} examined some finite numerical methods to solve a class of initial-boundary value fractional advection dispersion equation with variable coefficients on a finite domain. Huang and Oberman \cite{Huang} proposed a new numerical method based on the singular integral representation for the operator. The method combined finite differences with numerical quadrature to obtain a discrete convolution operator with positive weights. Liu et al. \cite{Liu} transformed the space fractional FPE into an ordinary differential equation, which could be solved through backward difference formula.  Yang et al. \cite{Yft} considered the numerical solution of a fractional partial differential equation with Riesz space fractional derivatives on a finite domain. Ren et al. \cite{Rhw} proposed an efficient method based on the shifted Chebyshev-tau idea to solve an initial-boundary value problem for the fractional diffusion equations.  Xu et al. \cite{Yxu} used the path integral method to solve one-dimensional space fractional FPE.

Much effort has been made on the applications of FPEs for stochastic dynamical systems with additive $\alpha$-stable noises, one of the important applications is to the nonlinear filtering problem, which key ingredient is the state transition density \cite{spss, Hjk12}. Recently, we have successfully applied  FPE to the  nonlinear filtering problem \cite{zhang, qiao}. As a continuation of previous works on additive case, in this paper, our first goal is to derive the FPEs for It\^o SDEs with multiplicative $\alpha$- stable  noise by the adjoint operator method. The difficulty lies in obtaining the explicit expressions for the adjoint operators of the infinitesimal generators associated with such SDEs. The second is to develop an accurate numerical scheme with stability and convergence analysis for one-dimensional case, on this basis, to extend these results to asymmetric case. The third is to apply FPE to the nonlinear filtering problem.

The rest of this paper is organized as follows. After some preliminaries in Section 2, we derive the nonlocal FPE for SDE  with multiplicative symmetric $\alpha$-stable noise in Section 3. An accurate numerical scheme is proposed to simulate the nonlocal FPE in bounded and unbounded domain. Moreover, some examples are given to illustrate our main results. In section 4, we extend the above results to the asymmetric case. In Section 5, we apply the above results to the nonlinear filtering problem. The paper is concluded in Section 6.
%\par
% In this paper, C denotes the generic constant which may vary from line to line.
\section{Preliminaries } We recall some basic facts about $\alpha$-stable noise.
\label{sec:pre}

\begin{definition}
A real-valued stochastic process $L=\{L_t\}_{t\ge0}$ is called an one-dimensional L\'evy  process if
\begin{enumerate}
\item[(i)]$L_0$=0, ~(a.s);
\item[(ii)]$L_t$ has independent increments and stationary increments;
\item[(iii)]$L_t$ has stochastically continuous sample paths.
\end{enumerate}
\end{definition}

An one-dimensional L\'evy process $L$ is characterized by a drift coefficient $b \in {\mathbb{R}}$, a non-negative constant $ Q $ and a Borel measure $\nu$ defined on ${\mathbb{R}}\backslash \{ 0\} $. And we call $(b,Q,\nu)$ the generating triplet of the L\'evy process $L$. Moreover, we have the L\'evy-It\^o decomposition for $L$ as follows:

\begin{equation*}
{L_t}= bt + \sqrt{Q}B_t + \int_{\left| y \right| < 1} y\widetilde N(t,dy) + \int_{\left| y \right| \ge 1} y N(t,dy),
\end{equation*}
where $N:[0,\infty)\times \B(\Ro)\to\N$ is the Poisson random measure with jump measure $\nu$, i.e., $\nu (S) = \mathbb{E}N(1,S)$ for all $S\in\B(\Ro)$, the measure $\nu $ is called the L\'evy measure satisfying $\int_{\Ro}(x^2\wedge1)\nu(dx) <\infty$, $\widetilde N(dt,dy) = N(dt,dy) - \nu (dy)dt$ is the compensated Poisson random measure, and $B_t$ is a standard one-dimensional Brownian motion independent with $N$.

The characteristic function of $L$ is given by
\begin{equation*}
\mathbb{E}[\exp({\rm i} u L_t )]=\exp(t\psi(u)), ~~~u \in {\mathbb{R}},
\end{equation*}
where the function $\psi:{\mathbb{R}}\rightarrow \mathbb{C}$ is called the characteristic exponent of $L$ and has the following representation
\begin{equation*}
\psi(u)={\rm i} u b-\frac{Q}{2} u^2 +\int_{{\mathbb{R}}\backslash \{ 0\}}{\left(e^{{\rm i}uz }-1-{\rm i} uz {I_{\{ \left| z \right| < 1\} }}\right)\nu(dz)}.
\end{equation*}

\begin{definition}
For $\alpha \in (0,2)$, an one-dimensional symmetric $\alpha$-stable process $L^{\alpha}=\{L^{\alpha}_{t}\}_{t\ge0}$ is a L\'evy process such that its characteristic exponent $\psi$ is given by
\begin{equation*}
\psi(u)=-\lvert u \rvert^{\alpha},  ~for~u \in {\mathbb{R}}.
\end{equation*}

\end{definition}
Therefore, for one-dimensional symmetric $\alpha$-stable  process, the diffusion matrix $ Q= 0$,
the drift vector $ b= 0$, and the L\'evy measure is given by
\begin{equation}\label{nu}
\nu(du)=\frac{c(1,\alpha)}{{\lvert u \rvert}^{1+\alpha}}du=:\nu^{\alpha}(du),
\end{equation}
where $ c(1, \alpha):=\alpha\Gamma((1+\alpha)/2)/{(2^{1-\alpha}\pi^{1/2}\Gamma(1-\alpha/2))}$ and $\Gamma$  is the Gamma function.

For every  measurable function $\phi\in \mathcal{C}^2_{0}({\mathbb{R}})$, the generator for the one-dimensional symmetric $\alpha $-stable
process is
\begin{equation*}
A\phi(x)={\rm{{\mathbf{P.V}.}}} \int_{{\mathbb{R}}\backslash \{ 0\}}\left[\phi(x+u)-\phi(x)\right]\nu^{\alpha}(du),
\end{equation*}
where ${\rm{{\mathbf{P.V}.}}}$ stands for the Cauchy principle value; that is
\begin{equation*}
A\phi(x)=\lim_{\varepsilon\rightarrow 0} \int_{\{u\in \mathbb{R}: |x-u|>\varepsilon\}}\left[\phi(x+u)-\phi(x)\right]\nu^{\alpha}(du).
\end{equation*}
Here the Fourier transform for $\phi$ is defined by
\begin{equation}
\mathbb{F}(\phi)(k)=\frac{1}{\sqrt{2\pi}}\int_{\mathbb{R}}e^{-ikx}\phi(x)dx.
\end{equation}

It is known in \cite{Apple} that ${A}$ extends uniquely to a self-adjoint operator in the domain. By Fourier inverse transform, we have
\begin{equation*}
A\phi=(-\Delta)^{\alpha/2}\phi.
\end{equation*}

\section{FPE for symmetric case}
 \par
Let us consider the following stochastic differential equation
\begin{equation}
\label{ori}
 d{X_t} = {f}({X_t})dt + {g}({X_t})d{B_t} + \sigma({X_{t-}})dL_t^{{\alpha}},\quad X_0=x_0,
\end{equation}
where $x_0\in \R$, $f$ is  a given deterministic vector field, ${g}$ is  the diffusion coefficient, $\sigma$ is called the noise intensity, $X_{t-}$ means
the left limit, i.e.,  $X_{t-}=\lim_{l \uparrow t} X(l)$, $B=\{B_t\}_{t\ge 0}$ is a standard Brownian motion, $L^{\alpha}=\{L^\alpha\}_{t\ge 0}$ is an one-dimensional symmetric $\alpha$-stable  process with generating triplet $(0,0,{\nu}^{\alpha})$,  which is independent of $B$.
\par
We make the following assumptions  on the drift and  diffusion coefficients.
\par
{\bf Hypothesis  H1. }
The nonlinear terms $f, g$ satisfy the  Lipschitz conditions, i.e., there exists a positive constant $\gamma$ such that for all $x_1,x_2\in \mathbb{R}$,
 \begin{equation*}
 \begin{aligned}
 &\ \lvert f(x_1)-f(x_2)\rvert^{2}+\lvert g(x_1)-g(x_2)\rvert^{2}+\int_{|y|<1}|\sigma(x_1)y-\sigma(x_2)y|^{2}\nu^{\alpha}(dy) \\
 \le&\ \gamma \left[\lvert x_1-x_2\rvert^2\right].
 \end{aligned}
 \end{equation*}

\par
{\bf Hypothesis  H2. }  There exists $K>0$ such that for all $x\in \mathbb{R}$,
\begin{equation*}
2xf(x)+|g(x)|^{2}+\int_{|y|<1}|\sigma(x)y|^{2}\nu^{\alpha}(dy)\leq K(1+x^{2}).
\end{equation*}

\par
{\bf Hypothesis  H3. } The function $|\sigma(\cdot)|^{\alpha}\in {C^{1}(\mathbb{R})}$ and $\sigma(x)\neq 0$ for all $x$.
\begin{remark}
Under Hypotheses $\bf{H1}$ and $\bf{H2}$, the stochastic differential equation \eqref{ori} has a unique global solution \cite[Theorem 3.1]{Wu}.
\end{remark}
\subsection{Derivation of FPE for symmetric case}
By the It\^o-L\'evy decomposition theorem, the SDE \eqref{ori} can be rewritten as
\begin{equation}
\label{orio}
d{X_t} = {f}({X_t})dt + {g}({X_t})d{B_t} + \int_{\left| y \right| < 1} \sigma({X_{t-}}) y \widetilde N(dt,dy)+
\int_{\left| y \right| \geq 1} \sigma({X_{t-}}) y N(dt,dy),~~~ X_0=x_0,
\end{equation}
Using the similar method of \cite[Theorem 6.7.4]{Apple}, the generator for the SDE \eqref{orio} in the case of symmetric $\alpha$-stable process is
\begin{equation*}
\begin{aligned}
A\varphi(x)=&\ f(x)\varphi^{'}(x)+\frac{1}{2}g^{2}(x)\varphi^{''}(x) \\
&\ +\int_{{\mathbb{R}}\backslash \{ 0\}}\left[\varphi\left(x+\sigma(x)y \right)-\varphi(x)-\sigma(x)y\varphi^{'}(x){I}_{\{|y|<1\}}(y)\right]\nu^{\alpha}(dy).
\end{aligned}
\end{equation*}
Under Hypothesis $\bf{H3}$, we have
\begin{equation*}
\begin{aligned}
A\varphi(x)=&\ f(x)\varphi^{'}(x)+\frac{1}{2}g^{2}(x)\varphi^{''}(x)+\int_{{\mathbb{R}}\backslash \{ 0\}}\left[\varphi\left(x+\sigma(x)y \right)-\varphi(x)-\sigma(x)y\varphi^{'}(x){I}_{\{|y|<1\}}(y)\right]\nu^{\alpha}(dy)\\
=&\ f(x)\varphi^{'}(x)+\frac{1}{2}g^{2}(x)\varphi^{''}(x)+|\sigma(x)|^{\alpha}\int_{{\mathbb{R}}\backslash \{ 0\}}\left[\varphi\left(x+z\right)-\varphi(x)-z\varphi^{'}(x){I}_{\{|z|<1\}}(z)\right]\nu^{\alpha}(dz)\\
&\ +|\sigma(x)|^{\alpha}\int_{{\mathbb{R}}\backslash \{ 0\}}z\varphi^{'}(x)\left[{I}_{\{|z|<1\}}(z)-{I}_{\{|z|<|\sigma(x)|\}}(z)\right]\nu^{\alpha}(dz)\\
=:&\ I_1+I_2+I_3+I_4.
\end{aligned}
\end{equation*}
\begin{remark}
Since $\nu^\alpha$ in \eqref{nu} is symmetric, we have $I_4=0$ in the sense of the Cauchy principle value.
\end{remark}
Let us try to find the adjoint operator $A^{*}$ in the Hilbert space $ L^{2}(\mathbb{R})$. The adjoint parts for the
first two terms $I_1$ and $I_2$ in $ A$ are easy to find via integration by parts: $-(f(x)\varphi(x))^{'}$ and $\frac{1}{2}(g^{2}(x)\varphi)^{''}$. For the third term $I_3$, denoted by $\tilde{{A}}\varphi$, we have the  following lemma.
\begin{lemma}
Under Hypotheses $\bf{H1}$-$\bf{H3}$, the adjoint operator of $\tilde{{A}}$ in the sense of the Cauchy principle value is
\begin{equation*}
\begin{aligned}
\tilde{A}^{*} v(x)&=\int_{{\mathbb{R}}\backslash \{ 0\}}\left[|\sigma(x+z)|^{\alpha}v(x+z)-|\sigma(x)|^{\alpha}v(x)\right]\nu^{\alpha}(dz).
\end{aligned}
\end{equation*}
\end{lemma}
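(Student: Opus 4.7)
My plan is to verify $\int \tilde A\varphi(x) v(x)\,dx = \int \varphi(x) \tilde A^* v(x)\,dx$ on the dense class $\varphi, v \in C_c^\infty(\mathbb R)$ by Fubini plus a shift change of variable. The preparatory step is to kill the drift-correction in the definition of $\tilde A\varphi$: as in Remark 2, evenness of $\nu^\alpha$ gives $\mathrm{P.V.}\!\int_{|z|<1} z\,\nu^\alpha(dz) = 0$, so
\[
\tilde A\varphi(x) = |\sigma(x)|^\alpha\,\mathrm{P.V.}\!\!\int_{\Ro}[\varphi(x+z)-\varphi(x)]\,\nu^\alpha(dz).
\]
Averaging the $\varepsilon$-truncated integrand with its image under $z\mapsto -z$ then yields the equivalent absolutely convergent form
\[
\tilde A\varphi(x) = \tfrac{1}{2}|\sigma(x)|^\alpha\!\int_{\Ro}[\varphi(x+z)+\varphi(x-z)-2\varphi(x)]\,\nu^\alpha(dz),
\]
whose bracketed second difference is $O(|z|^2)$ near $0$ by Taylor expansion of the smooth compactly supported $\varphi$, hence absolutely integrable against $\nu^\alpha\sim|z|^{-1-\alpha}$ for every $\alpha\in(0,2)$.

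With this absolute integrability in hand, Fubini applies to $\int \tilde A\varphi(x) v(x)\,dx$. I would then change variables by $y=x+z$ in the $\varphi(x+z)$ summand and by $y=x-z$ in the $\varphi(x-z)$ summand (Jacobian one in each), transferring the shifts off $\varphi$ onto the product $v|\sigma|^\alpha$, to obtain
\[
\int \varphi(y)\cdot \tfrac{1}{2}\!\int_{\Ro}\!\bigl[|\sigma(y+z)|^\alpha v(y+z)+|\sigma(y-z)|^\alpha v(y-z)-2|\sigma(y)|^\alpha v(y)\bigr]\,\nu^\alpha(dz)\,dy.
\]
The inner $z$-integral is precisely the $z\mapsto -z$ symmetrization of $|\sigma(y+z)|^\alpha v(y+z)-|\sigma(y)|^\alpha v(y)$, which by evenness of $\nu^\alpha$ equals the PV integral of the single difference. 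This identifies the inner integral with $\tilde A^* v(y)$ and closes the argument.

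The main obstacle is legitimizing the passage between PV and symmetrized forms when $\alpha\ge 1$, since the single-difference expressions for $\tilde A\varphi$ and $\tilde A^* v$ are not absolutely convergent at the origin in that range. I would perform all manipulations on the truncated domain $\{|z|>\varepsilon\}$, where Fubini and the substitutions are unambiguous, and then let $\varepsilon\to 0^+$ by dominated convergence on the symmetrized second-difference form. Hypothesis H3, which ensures $|\sigma|^\alpha\in C^1$, is used precisely so that $|\sigma(y+z)|^\alpha v(y+z)-|\sigma(y)|^\alpha v(y) = O(|z|)$ uniformly in $y$ on compacts; this makes the symmetrization admissible and guarantees that the PV integral defining $\tilde A^* v$ exists. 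Beyond this principal-value bookkeeping, the calculation is routine.
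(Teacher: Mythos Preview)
Your proposal is correct and rests on the same core mechanism as the paper's proof: pair $\tilde A\varphi$ against $v$, interchange the $x$- and $z$-integrations, and shift the $x$-variable to move the translation from $\varphi$ onto $|\sigma|^\alpha v$. The tactical difference is that the paper keeps the compensator $z\varphi'(x)I_{\{|z|<1\}}$ throughout, handles it by an integration by parts in $x$, and only at the very end invokes symmetry of $\nu^\alpha$ to drop the resulting $z(|\sigma|^\alpha v)'I_{\{|z|<1\}}$ term and pass to the principal-value form. You instead eliminate the compensator at the outset via Remark~2 and work with the symmetrized second-difference representation, which buys you an absolutely convergent integrand for $\varphi\in C_c^\infty$ and hence a clean justification of Fubini; the paper's swap of integrals is asserted without this care. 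Your explicit $\{|z|>\varepsilon\}$ truncation-and-limit argument for the $\alpha\ge 1$ regime is likewise more scrupulous than the paper's treatment, which proceeds formally. Both routes arrive at the same expression, and neither needs more than the $C^1$ regularity of $|\sigma|^\alpha$ from H3 once the principal-value interpretation is in force.
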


\begin{proof}
By the definition of adjoint operator, for $v$ in the domain of $\tilde{A}^{*}$, we have
\begin{equation*}
\begin{aligned}
&\ \int_{\mathbb{R}}\tilde{{A}}\varphi(x)v(x)dx \\
=&\ \int_{\mathbb{R}}|\sigma(x)|^{\alpha}\int_{{\mathbb{R}}\backslash \{ 0\}}\left[\varphi\left(x+z\right)-\varphi(x)-z\varphi^{'}(x){I}_{\{|z|<1\}}(z)\right]\nu^{\alpha}(dz)v(x)dx \\
=&\ \int_{{\mathbb{R}}\backslash \{ 0\}}\left\{\int_{\mathbb{R}}\left[\varphi\left(x+z\right)-\varphi(x)-z\varphi^{'}(x){I}_{\{|z|<1\}}(z)\right]|\sigma(x)|^{\alpha}v(x)dx\right\}\nu^{\alpha}(dz)\\
=&\ -\int_{\mathbb{R}}\varphi(x)\left\{\int_{{\mathbb{R}}\backslash \{ 0\}}\left[|\sigma(x)|^{\alpha}v(x)-|\sigma(x-z)|^{\alpha}v(x-z)-{I}_{\{|z|<1\}}(z)z\left(|\sigma(x)|^{\alpha}v(x)\right)^{'}\right]\nu^{\alpha}(dz)\right\}dx.
\end{aligned}
\end{equation*}
Therefore, the adjoint operator of $\tilde{{A}}$ is
\begin{equation*}
\begin{aligned}
\tilde{A}^{*}v(x)&=-\int_{{\mathbb{R}}\backslash \{ 0\}}\left[|\sigma(x)|^{\alpha}v(x)-|\sigma(x-z)|^{\alpha}v(x-z)-{I}_{\{|z|<1\}}(z)z\left(|\sigma(x)|^{\alpha}v(x)\right)^{'}\right]\nu^{\alpha}(dz)\\
&={\rm{{\mathbf{P.V}.}}}\int_{{\mathbb{R}}\backslash \{ 0\}}\left[|\sigma(x+z)|^{\alpha}v(x+z)-|\sigma(x)|^{\alpha}v(x)\right]\nu^{\alpha}(dz).
\end{aligned}
\end{equation*}
\end{proof}

By Lemma 1, we obtain the following FPE in the case of multiplicative  $\alpha$-stable L\'evy noise.
\begin{theorem}
Under Hypotheses $\bf{H1}$-$\bf{H3}$, the Fokker-Planck equation for SDE \eqref{ori} is
\begin{equation}
\label{density0}
\frac{\partial p}{\partial t}=A^{*}p, ~~~~p(x,0)=\delta(x-x_0),
\end{equation}
where
\begin{equation*}
A^{*}p(x,t)=-\partial_{x}\left(f(x)p(x,t)\right)+\frac{1}{2}\partial_{xx}\left(g^{2}(x)p(x,t)\right)+\tilde{A}^{*}p(x,t).
\end{equation*}
\end{theorem}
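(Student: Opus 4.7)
The plan is to use the standard adjoint operator method: start from the generator $A$ already computed in the decomposition $A = I_1 + I_2 + I_3 + I_4$ (with $I_4 = 0$ by the symmetry of $\nu^\alpha$ noted in the preceding remark), apply Dynkin's formula to pass from the generator to a weak evolution equation for the density, and then identify the adjoint $A^*$ term-by-term, using Lemma 1 for the hardest piece.

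More concretely, I would first invoke the martingale/Dynkin formula for the solution of \eqref{orio}: for any $\varphi \in C_0^2(\mathbb{R})$,
\begin{equation*}
\mathbb{E}[\varphi(X_t)] - \varphi(x_0) = \int_0^t \mathbb{E}[A\varphi(X_s)]\, ds,
\end{equation*}
which holds under Hypotheses \textbf{H1}--\textbf{H3} because these guarantee existence of a unique global solution and integrability of each term in the generator (the jump part being absorbed into $|\sigma(x)|^\alpha$ times a standard $\alpha$-stable kernel, thanks to \textbf{H3}). Differentiating in $t$ and rewriting the expectations as integrals against the (assumed) density $p(\cdot,t)$ gives the weak form
\begin{equation*}
\frac{d}{dt}\int_{\mathbb{R}} \varphi(x)\, p(x,t)\, dx \;=\; \int_{\mathbb{R}} A\varphi(x)\, p(x,t)\, dx.
\end{equation*}

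Next I would identify the adjoint of each summand of $A$ in $L^2(\mathbb{R})$. For $I_1 = f(x)\varphi'(x)$, one integration by parts yields $-\partial_x(f(x)p)$; for $I_2 = \tfrac12 g^2(x)\varphi''(x)$, two integrations by parts yield $\tfrac12 \partial_{xx}(g^2(x)p)$; the boundary terms vanish since $\varphi$ has compact support. For the nonlocal piece $I_3 = \tilde A \varphi$, Lemma 1 delivers directly
\begin{equation*}
\tilde A^* p(x,t) \;=\; \mathrm{P.V.}\!\int_{\mathbb{R}\setminus\{0\}}\!\bigl[|\sigma(x+z)|^{\alpha} p(x+z,t) - |\sigma(x)|^{\alpha} p(x,t)\bigr]\nu^{\alpha}(dz),
\end{equation*}
and $I_4 = 0$ contributes nothing. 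Summing the three adjoints produces the stated $A^* p$. Since $\varphi \in C_0^2(\mathbb{R})$ is arbitrary, the weak identity $\partial_t \int \varphi\, p\, dx = \int \varphi\, A^* p\, dx$ upgrades to the pointwise PDE $\partial_t p = A^* p$. The initial condition follows from $X_0 = x_0$ almost surely, so $p(x,0) = \delta(x - x_0)$ in the distributional sense.

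The main conceptual obstacle---finding the adjoint of the multiplicative nonlocal operator $\tilde A$---has already been resolved by Lemma 1, which is the novel ingredient; everything else is a routine assembly. The only subtle point I would be careful about is justifying the Fubini exchange and the integration by parts inside the principal-value integral in the $\tilde A$ step (this is implicit in Lemma 1 but worth flagging): Hypothesis \textbf{H3}, namely $|\sigma|^\alpha \in C^1(\mathbb{R})$ together with $\sigma \neq 0$, is precisely what allows the transformation $z \mapsto \sigma(x) y$ and the subsequent principal-value interpretation to be taken termwise, so that the cancellation producing the $I_4 = 0$ remark and the symmetric form of $\tilde A^*$ are both legitimate.
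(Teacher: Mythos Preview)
Your proposal is correct and follows exactly the approach the paper takes: the paper's entire ``proof'' is the single sentence ``By Lemma 1, we obtain the following FPE in the case of multiplicative $\alpha$-stable L\'evy noise,'' leaving the Dynkin-formula-to-weak-form passage and the routine adjoints of $I_1$, $I_2$ implicit. You have simply spelled out those standard steps that the paper omits, so there is no substantive difference in method.
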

\begin{remark}
In case of $g\equiv0$, the existence of the solution of \eqref{density0} can be found in \cite[Theorem 2.2]{Wang}.
\end{remark}
\subsection{Numerical method}
In this subsection, we need to make a modification for Hypothesis $\bf{H3}$. Precisely, we require $|\sigma(\cdot)|^{\alpha}\in {C^{2}(\mathbb{R})}$ and  $\sigma(x)\neq 0$ for all $x\in \mathbb{R}$.
First, we present the numerical schemes for the absorbing boundary condition, that is, $p(x,t)=0$ for all $t\ge0$, when
$x \notin (-1,1)$.

Set $u(x,t)=|\s(x)|^{\a}p(x,t)$, in the sense of Cauchy principle value, we have
\begin{equation}
\begin{aligned}
\label{density1}
u_t(x,t) =&\ -|\s(x)|^{\a}\partial_{x}\left(\frac{f(x)}{|\s(x)|^\a}u(x,t)\right)+\frac{1}{2}|\s(x)|^{\a}\partial_{xx}\left(\frac{g^{2}(x)}{|\s(x)|^\a}u(x,t)\right)  \\
&+|\s(x)|^\a \int_{{\mathbb{R}}\backslash \{ 0\}}\bigg[u(x+z,t)- u(x,t)\bigg]\nu^{\alpha}(dz) \\
=&\ \frac{1}{2}g^{2}(x)\partial_{xx}\left(u(x,t)\right)+M(x)\partial_{x}(u(x,t))(x,t)+N(x)u(x,t)\\
&+|\s(x)|^\a\int_{{\mathbb{R}}\backslash \{ 0\}}\bigg[u(x+z,t)- u(x,t)\bigg]\nu^{\alpha}(dz),
\end{aligned}
\end{equation}
where
\begin{equation*}
\left\{
\begin{aligned}
M(x)&=|\s(x)|^{\a}\left(\frac{g^{2}(x)}{|\s(x)|^{\a}}\right)^{'}-f(x), \\
N(x)&=\frac{1}{2}|\s(x)|^\a\left(\frac{g^{2}(x)}{|\s(x)|^{\a}}\right)^{''}-|\s(x)|^{\a}\left(\frac{f(x)}{|\s(x)|^{\a}}\right)^{'}.
\end{aligned}
\right.
\end{equation*}

Due to the absorbing boundary condition, the above equation~(\ref{density1}) becomes

\bess
    u_t(x,t) &=& \frac{1}{2}g^{2}(x)\partial_{xx}(u(x,t)) + M(x) \partial_{x}(u(x,t)) + \tilde{N}(x)u(x,t) \nonumber\\
        &&+ c(1,\alpha) |\s(x)|^\a \int_{-1-x}^{1-x} \frac{u(x+z,t)-u(x,t)}{|z|^{1+\a}} \dz,
\eess
where
\bess
    \tilde{N}(x) = N(x)- \frac{c(1,\alpha)|\s(x)|^\a}{\a} \left[ \frac{1}{(1+x)^\a}+ \frac{1}{(1-x)^\a} \right].
\eess

Let us divide the interval $[-2,2]$ into $4J$ subintervals and define $x_j=jh$ for $-2J \leq j \leq 2J$,
where $h=\frac{1}{J}$. We denote $U_j$ as the numerical solution for $u$ at $(x_j, t)$. For
the first derivative, we use the upwind scheme as follows,
\bess
     \delta_u U_j=
    \begin{cases}
       \frac{U_{j}-U_{j-1}}{h}\; ,   & \text{ if $ M(x_j) < 0 $,}\\
       \frac{U_{j+1}-U_{j}}{h}\; ,   & \text{ if $ M(x_j) > 0 $}.
    \end{cases}
\eess

Using a modified trapezoidal rule for the singular integral, we have the following semi-discrete scheme

\bear
\label{U}
    \frac{\de U_j}{\de t} &= C_h \frac{U_{j-1}-2U_j+U_{j+1}}{h^2}  + M(x_j) \delta_u U_j
     +\tilde{N}(x_j)U_j \nonumber \\
     &+ c(1,\a) h|\s(x_j)|^\a \sum\limits_{k=-J-j, k\neq 0}^{J-j} \frac{U_{j+k}-U_j}{|x_k|^{1+\a}} ,
\enar
where the summation symbol $\sum$ means the terms of both end indices are multiplied by $\frac12$, and
\bess
    C_h= \frac{g(x_j)^2}{2} - c(1,\a) \zeta(\a-1)|\s(x_j)|^\a h^{2-\a},
\eess
where $\zeta$ is the Riemann zeta function.

For the natural condition, the semi-discrete scheme  becomes
\begin{equation}
\begin{aligned}
\label{nature0}
    \frac{\de U_j}{\de t} &=& C_h \frac{U_{j-1}-2U_j+U_{j+1}}{h^2}  + M(x_j) \delta_u U_j
     + N(x_j)U_j \\
     &&+ c(1,\a) h|\s(x_j)|^\a \sum\limits_{k=-J-j, k\neq 0}^{J-j} \frac{U_{j+k}-U_j}{|x_k|^{1+\a}} ,
\end{aligned}
\end{equation}
where $J=\tilde{L}/h$ and $\tilde{L} \gg 1$.
\begin{remark}
The domain $(-\tilde{L}, \tilde{L})$ is required to be large enough so as to make the semi-discrete scheme convergence.
\end{remark}

\begin{prop}
[Maximum principle for the absorbing condition]
 For the absorbing boundary condition and explicit Euler for time derivative, the scheme \eqref{U}
 satisfies the discrete maximum principle with $f=g=0$, if $|\s(x)|\leq \widetilde{M}$($\widetilde{M}$ is a constant),
 and the $\bigtriangleup t$ and $h$ satisfy the following condition,
 \bear
 \label{MPcondition0}
 \frac{\bigtriangleup t }{h^\a}  \leq \frac{1}{2 \widetilde{M}^\a c(1,\a) [1+\frac{1}{\a}-\zeta(\a-1)] }.
 \enar
\end{prop}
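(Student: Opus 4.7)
With $f=g=0$ the semi-discrete scheme simplifies: $M(x_j)=N(x_j)=0$, $\tilde N(x_j)=-\frac{c(1,\alpha)|\sigma(x_j)|^\alpha}{\alpha}\bigl[(1+x_j)^{-\alpha}+(1-x_j)^{-\alpha}\bigr]$, and $C_h=-c(1,\alpha)\zeta(\alpha-1)|\sigma(x_j)|^\alpha h^{2-\alpha}$. Combining scheme \eqref{U} with explicit Euler in time I rewrite the update as
\begin{equation*}
U_j^{n+1}=a_{j,j}U_j^n+\sum_{k\neq j}a_{j,k}U_k^n,
\end{equation*}
and I plan to verify the two standard ingredients of a discrete maximum principle: $a_{j,k}\ge 0$ for all $k$ and the row sum $\sum_k a_{j,k}\le 1$. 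Together these give $\min_k U_k^n\le U_j^{n+1}\le\max\{0,\max_k U_k^n\}$ for every $j$, which is the desired DMP.

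\textbf{Off-diagonal non-negativity.} For the nearest neighbours $k=j\pm 1$ the coefficient collects contributions from both the $C_h/h^2$-weighted second difference and the discrete non-local sum evaluated at $|x_{\pm 1}|=h$, and these combine to
\begin{equation*}
a_{j,j\pm 1}=\Delta t\,c(1,\alpha)|\sigma(x_j)|^\alpha h^{-\alpha}\bigl(1-\zeta(\alpha-1)\bigr),
\end{equation*}
which is positive since $\zeta$ has no real zeros on $(-1,1)$ and in fact is strictly negative there (e.g.\ $\zeta(-1)=-1/12$, $\zeta(0)=-1/2$, and $\zeta(s)\to-\infty$ as $s\to 1^-$), so $\zeta(\alpha-1)<0<1$ for every $\alpha\in(0,2)$. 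For $|k-j|\ge 2$ only the non-local sum contributes and the coefficient $\Delta t\,c(1,\alpha)h|\sigma(x_j)|^\alpha w_{k-j}/|x_{k-j}|^{1+\alpha}$, with positive trapezoidal weight $w_{k-j}\in\{\tfrac12,1\}$, is manifestly non-negative.

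\textbf{Diagonal entry and CFL bound.} The diagonal coefficient reads
\begin{equation*}
a_{j,j}=1+\Delta t\Bigl[2c(1,\alpha)\zeta(\alpha-1)|\sigma(x_j)|^\alpha h^{-\alpha}+\tilde N(x_j)-c(1,\alpha)h|\sigma(x_j)|^\alpha S_j\Bigr],
\end{equation*}
with $S_j:=\sum_{k=-J-j,\,k\neq 0}^{J-j}|x_k|^{-(1+\alpha)}$ (endpoints half-weighted). All three bracketed quantities are non-positive, so $a_{j,j}\ge 0$ amounts to a CFL-type upper bound on $\Delta t/h^\alpha$. Using $|\sigma(x_j)|\le\widetilde M$, the first term is bounded in absolute value by $2c(1,\alpha)(-\zeta(\alpha-1))\widetilde M^\alpha h^{-\alpha}$. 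For the remaining two, the modified trapezoidal rule is constructed precisely so that the boundary subtraction encoded in $\tilde N$ absorbs the tail of the discrete sum, giving
\begin{equation*}
\frac{|\tilde N(x_j)|}{c(1,\alpha)|\sigma(x_j)|^\alpha}+hS_j\le\frac{2}{h^\alpha}\Bigl(1+\tfrac{1}{\alpha}\Bigr),
\end{equation*}
which in turn reduces to the integral comparison $\sum_{k=1}^N k^{-(1+\alpha)}\le 1+1/\alpha$. Adding the two bounds yields
\begin{equation*}
a_{j,j}\ge 1-2\Delta t\,c(1,\alpha)\widetilde M^\alpha h^{-\alpha}\bigl[1+\tfrac{1}{\alpha}-\zeta(\alpha-1)\bigr],
\end{equation*}
which is non-negative exactly under condition \eqref{MPcondition0}. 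Finally, the row sum telescopes to $\sum_k a_{j,k}=1+\Delta t\,\tilde N(x_j)\le 1$, since the $C_h/h^2$ and $S_j$ contributions cancel between the diagonal and the off-diagonal entries, and $\tilde N(x_j)\le 0$.

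\textbf{Main obstacle.} The crux is the paired estimate for $|\tilde N(x_j)|+c(1,\alpha)h|\sigma(x_j)|^\alpha S_j$: individually each piece carries factors of the form $1/(\alpha h^\alpha)$ together with boundary singularities $1/(1\pm x_j)^\alpha$, and only the very particular pairing built into the modified trapezoidal quadrature cancels the problematic boundary behaviour and leaves the clean constant $1+1/\alpha$ appearing in \eqref{MPcondition0}. Once this cancellation is pinned down the rest is routine bookkeeping of linear-combination coefficients.
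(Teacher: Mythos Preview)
Your proposal is correct and follows essentially the same route as the paper's proof (Appendix~A.1): both write the explicit Euler update as a linear combination of the $U_k^n$, verify that all off-diagonal weights are non-negative (using $\zeta(\alpha-1)\le 0$), verify that the diagonal weight $1-\hat L$ stays non-negative under the CFL condition~\eqref{MPcondition0}, and observe that the row sum is $1+\Delta t\,\tilde N(x_j)\le 1$, which yields the bound $U_j^{n+1}\le M$.

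One small point of imprecision: your sentence ``which in turn reduces to the integral comparison $\sum_{k=1}^N k^{-(1+\alpha)}\le 1+1/\alpha$'' does not by itself give the paired bound you need, since that inequality alone controls only $hS_j$ and leaves no room for the strictly positive term $|\tilde N(x_j)|/(c(1,\alpha)|\sigma(x_j)|^\alpha)$. The paper makes this step explicit: it rewrites $\tfrac{1}{\alpha}[(1+x_j)^{-\alpha}+(1-x_j)^{-\alpha}]$ as the tail integral $\int_{|y|>1\mp x_j}|y|^{-1-\alpha}\,dy$, bounds the finite trapezoidal sum $hS_j$ by $2h^{-\alpha}+\int_{(-1-x_j,\,1-x_j)\setminus(-h,h)}|y|^{-1-\alpha}\,dy$ (the $2h^{-\alpha}$ coming from the $k=\pm1$ terms), and then \emph{adds} the two so that the integrals merge into $\int_{|y|>h}|y|^{-1-\alpha}\,dy=2/(\alpha h^\alpha)$. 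That merging is exactly the cancellation you correctly flagged as the ``main obstacle,'' so your verbal description is right even if the displayed reduction is a little loose.
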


\begin{proof}
See Appendix A.1.
\end{proof}

Similarly, we will gain the maximum principle for the natural condition.

\begin{prop}[Maximum principle for the nature condition] \label{propMax1}
 For the nature condition and explicit Euler for time derivative, the scheme (\ref{nature0})
 satisfies the discrete maximum principle with $f=g=0$, if the $\bigtriangleup t$ and $h$ satisfy the following condition,
 \bear  \label{MPcondition}
   \frac{\bigtriangleup t }{h^\a}  \leq \frac{1}{2 \widetilde{M}^\a c(1,\a) [1+\frac{1}{\a}-\zeta(\a-1)] }.
 \enar

\end{prop}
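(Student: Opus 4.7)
The plan is to proceed in exactly the spirit of Proposition 1: rewrite the explicit Euler update of \eqref{nature0} as an affine combination $U_j^{n+1}=\sum_k b_{jk}U_k^n$, then verify (i) $b_{jk}\ge 0$ for every $j,k$ and (ii) $\sum_k b_{jk}=1$ for every $j$. Together (i) and (ii) immediately give the discrete maximum principle $\min_k U_k^n\le U_j^{n+1}\le\max_k U_k^n$.

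Setting $f\equiv g\equiv 0$ in the definitions of $M,N$ and $C_h$ forces $M\equiv 0$, $N\equiv 0$ and $C_h=-c(1,\a)\zeta(\a-1)|\s(x_j)|^\a h^{2-\a}$, so \eqref{nature0} collapses to a pure nonlocal-diffusion update whose right-hand side is a weighted sum of increments $U^n_{j+k}-U^n_j$. Applying that right-hand side to the constant state $U\equiv 1$ yields zero, so step (ii) is automatic. For step (i), the off-diagonal coefficient at $k=\pm 1$ works out to $\delt\,c(1,\a)|\s(x_j)|^\a h^{-\a}\bigl[1-\zeta(\a-1)\bigr]$, which is positive because $\zeta(\a-1)<0$ for $\a\in(0,2)$; the coefficients for $|k|\ge 2$ inherit positivity directly from the nonlocal weight. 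Thus only the diagonal entry needs a CFL restriction.

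The main (and essentially only) obstacle is controlling the diagonal entry
\bess
b_{jj}=1-\delt\,c(1,\a)|\s(x_j)|^\a h^{-\a}\Bigl[-2\zeta(\a-1)+\sum_{k\neq 0} w_k|k|^{-1-\a}\Bigr],
\eess
where $w_k\in\{\tfrac12,1\}$ records the modified trapezoidal weights. I plan to use the elementary estimate $\sum_{k=1}^{\infty}k^{-1-\a}\le 1+\int_1^{\infty}x^{-1-\a}\,\dx=1+\frac{1}{\a}$, doubled to $2(1+\frac{1}{\a})$ once negative indices are included; the half-weights at the endpoints can only shrink partial sums, so this bound is independent of $j$ and of the truncation parameter $J=\tilde L/h$. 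Combined with $|\s(x_j)|\le\widetilde M$, this yields $b_{jj}\ge 1-2c(1,\a)\widetilde M^{\a}\bigl[1+\frac{1}{\a}-\zeta(\a-1)\bigr]\delt/h^{\a}$, which is non-negative precisely under \eqref{MPcondition}.

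Finally, the only formal differences between \eqref{nature0} and the absorbing-case scheme treated in Proposition 1 are the substitution $\tilde N\to N$ and the larger domain radius $\tilde L$. Under $f=g=0$ the $N$-term vanishes identically, and the tail-sum estimate is $J$-independent, so the same CFL constant is recovered without modification and the proposition follows.
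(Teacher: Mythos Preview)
Your proposal is correct and follows essentially the same route as the paper. The paper does not spell out a separate proof for this proposition; it simply says ``Similarly'' and relies on the absorbing-case argument in Appendix~A.1, which likewise rewrites the explicit Euler step, checks that the off-diagonal weights (including the $\zeta(\a-1)$ correction at $k=\pm1$) are nonnegative, and bounds the diagonal coefficient via the same integral/series estimate $\sum_{k\ge1}k^{-1-\a}\le1+\frac1\a$ together with $|\s|\le\widetilde M$, arriving at the identical CFL constant. Your only refinement is to note explicitly that, because $N\equiv0$ in the natural case (in contrast to $\tilde N<0$ in the absorbing case), the coefficients satisfy $\sum_k b_{jk}=1$ rather than $\le1$, so the update is a genuine convex combination and the maximum principle is two-sided; the paper's Appendix~A.1 argument, transported verbatim, would only give the one-sided bound $U_j^{n+1}\le M$.
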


By the explicit Euler method for time integration \eqref{nature0}, then we have
\begin{equation}
\label{nature1}
\begin{aligned}
U_j^{n+1} =&\ U_j^n-\delt  c(x_j) \zeta(\a-1)h^{-\a} (U_{j+1}^n-2U_j^n+U_{j-1}^n) \\
            &\ - \frac{c(x_j)\delt}{\a}  \bigg[\frac{1}{(1+x_j)^\a} +\frac{1}{(1-x_j)^\a} \bigg]U_j^n
            + c(x_j)\delt h \sum\limits_{k=-J-j, k\neq 0}^{J-j} \frac{U_{j+k}^n-U_j^n}{|x_k|^{1+\a}}, \\
\end{aligned}
\end{equation}
where $J=\tilde{L}/h$ and $\tilde{L} \gg 1$.

When the ratio ${\bigtriangleup t }/{h^\a}$ satisfies the condition \eqref{MPcondition}, the explicit scheme \eqref{nature1} is  stable by the the linearity and discrete maximum principle.

In the following, we will present  the convergence analysis for the natural far-field condition.
\begin{prop}
The numerical solution $U^{n}_{j}$ of \eqref{nature1} converges to the analytic solution to \eqref{density1} for $x_j$ in $[-\tilde{L}/2, \tilde{L}/2]$ when the refinement path satisfies \eqref{MPcondition} and the length of the integration interval $2\tilde{L}$  in \eqref{nature1} tends to $\infty$.
\end{prop}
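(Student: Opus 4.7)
The plan is to prove convergence through the classical Lax--Richtmyer framework: stability plus consistency yields convergence. The stability ingredient is already supplied by Proposition \ref{propMax1}, so the bulk of the work is a careful local truncation error analysis, together with a separate argument controlling the error introduced by truncating the integration domain from $\R$ to $[-\tilde L,\tilde L]$.

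First I would introduce the pointwise error $e_j^n := U_j^n - u(x_j,t_n)$, where $u$ is the analytic solution of \eqref{density1}. Substituting $u(x_j,t_n)$ into the right-hand side of \eqref{nature1} and using the PDE, the residual can be written as
\begin{equation*}
e_j^{n+1} = L_h\, e^n_j + \delt\, \tau_j^n,
\end{equation*}
where $L_h$ is the linear spatial operator appearing on the right of \eqref{nature1} and $\tau_j^n$ is the local truncation error. The discrete maximum principle shows $\|L_h\|_{\infty}\le 1 + C\delt$ under the CFL condition \eqref{MPcondition}, so a discrete Gronwall argument yields $\max_j|e_j^n|\le e^{CT} T\max_{m,j}|\tau_j^m|$ for $n\delt\le T$.

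Second, I would decompose $\tau_j^n$ into four pieces and estimate each under the $C^2$-regularity assumed on $|\s(\cdot)|^\a$ together with the smoothness of $u$ in $x$ and $t$:
\begin{enumerate}[label=(\roman*)]
\item the explicit Euler time error, which is $O(\delt)$ by Taylor expansion in $t$;
\item the upwind/centered finite difference errors for the drift and local diffusion terms, which give $O(h)$ and $O(h^2)$ respectively;
\item the modified trapezoidal quadrature error for the singular integral over the symmetric window $[-\tilde L - x_j,\tilde L - x_j]$, where the correction $-c(1,\a)\zeta(\a-1)|\s(x_j)|^\a h^{2-\a}$ absorbed into $C_h$ exactly cancels the leading-order bias of the naive trapezoidal rule against the kernel $|z|^{-1-\a}$, leaving a residual of order $O(h^{2-\a})$;
\item the tail-truncation error of replacing $\int_{\R}$ by $\int_{-\tilde L - x_j}^{\tilde L - x_j}$, which, because $x_j\in[-\tilde L/2,\tilde L/2]$, is bounded by $C|\s(x_j)|^\a\int_{|z|>\tilde L/2}|z|^{-1-\a}\,\de z = O(\tilde L^{-\a})$ using the uniform boundedness of $u$.
\end{enumerate}

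Combining stability with this four-part consistency estimate gives
\begin{equation*}
\max_{|x_j|\le \tilde L/2}|e_j^n| = O(\delt + h + h^{2-\a} + \tilde L^{-\a}),
\end{equation*}
which tends to zero as $h\to 0$ along the refinement path \eqref{MPcondition} and $\tilde L\to\infty$.

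The main obstacle is item (iii): the sharp quadrature estimate for the hypersingular integral. Symmetric subtraction of $u(x_j,t)$ cancels the odd part of the singularity and effectively reduces the integrand's blow-up from $|z|^{-1-\a}$ to $|z|^{1-\a}$; this is precisely where the series $\sum_{k\neq 0}|k|^{-(\a-1)} = 2\zeta(\a-1)$ enters and why the modified weight $C_h$ is designed to eliminate the leading $\zeta(\a-1)u_{xx}$ bias of the trapezoidal rule. Rigorously pairing the Taylor expansion of $u(x_j+z,t)-u(x_j,t)$ with the Hurwitz-type summation is the delicate step; the other three contributions, together with the restriction of the convergence window to $[-\tilde L/2,\tilde L/2]$ (needed so that $\mathrm{dist}(x_j,\partial[-\tilde L,\tilde L])\ge\tilde L/2$ for the tail bound in (iv)), are routine.
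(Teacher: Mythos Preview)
Your approach is essentially the paper's: define $e_j^n=U_j^n-u(x_j,t_n)$, estimate the local truncation error by splitting it into time, quadrature, and tail pieces, then use the discrete maximum principle to propagate the bound. Two minor points. First, scheme \eqref{nature1} is the $f=g=0$ specialisation, so there are no upwind or centred-diffusion contributions; your item (ii) is superfluous here. Second, your item (iii) understates the effect of the $\zeta(\a-1)$ correction: the $C_h$ term is designed precisely to kill the $O(h^{2-\a})$ bias of the naive trapezoidal rule, so the remaining interior error is $O(h^{4-\a})$ (from the $\zeta(\a-3)u_{xxxx}$ term) and the overall spatial truncation is $O(h^2)$ from the Euler--Maclaurin boundary contributions, not $O(h^{2-\a})$. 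The paper accordingly states $|T_j^n|\le O(\Delta t)+O(h^2)+O(\tilde L^{-\a})$ and then iterates $\max_j|e_j^{n+1}|\le\max_j|e_j^n|+\Delta t\,\bar T$ directly, without a Gronwall factor, since the coefficient sum is bounded by $1$ rather than $1+C\Delta t$.
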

\begin{proof}
See Appendix A.2.
\end{proof}

\subsection{Numerical experiments}

\begin{example}
Consider the Langevin equation is of the form
\begin{equation}
\label{example2}
dX_t=f(X_t)dt+g(X_t)d B_t+\sigma(X_t)dL^{\alpha}_{t}, ~~~~X_0=0,
\end{equation}
where the function $f$ is determined by a function $V$, so that $ f=-\partial V(x,t)/\partial x$. The function $V$ could be called a climatic pseudo-potential in geosciences, energy potential in physics or profit or cost function in economics and optimization. Here we consider a motion in the time-independent bistable potential
\begin{equation}
V(x)=0.1x^2.
\end{equation}
The first noise term  is a Gaussian noise with intensity $g(x)$. The second noise term is an $\alpha$-stable noise with intensity $\sigma(x)=2+\sin(x)$. It is noting that we choose periodically modulated noise here. Moreover, the above model may be applied to explain the periodic recurrence of the earth's ice age on Earth \cite{yu}.
\end{example}

\befig[h]
\begin{center}
\includegraphics*[width=0.7\linewidth]{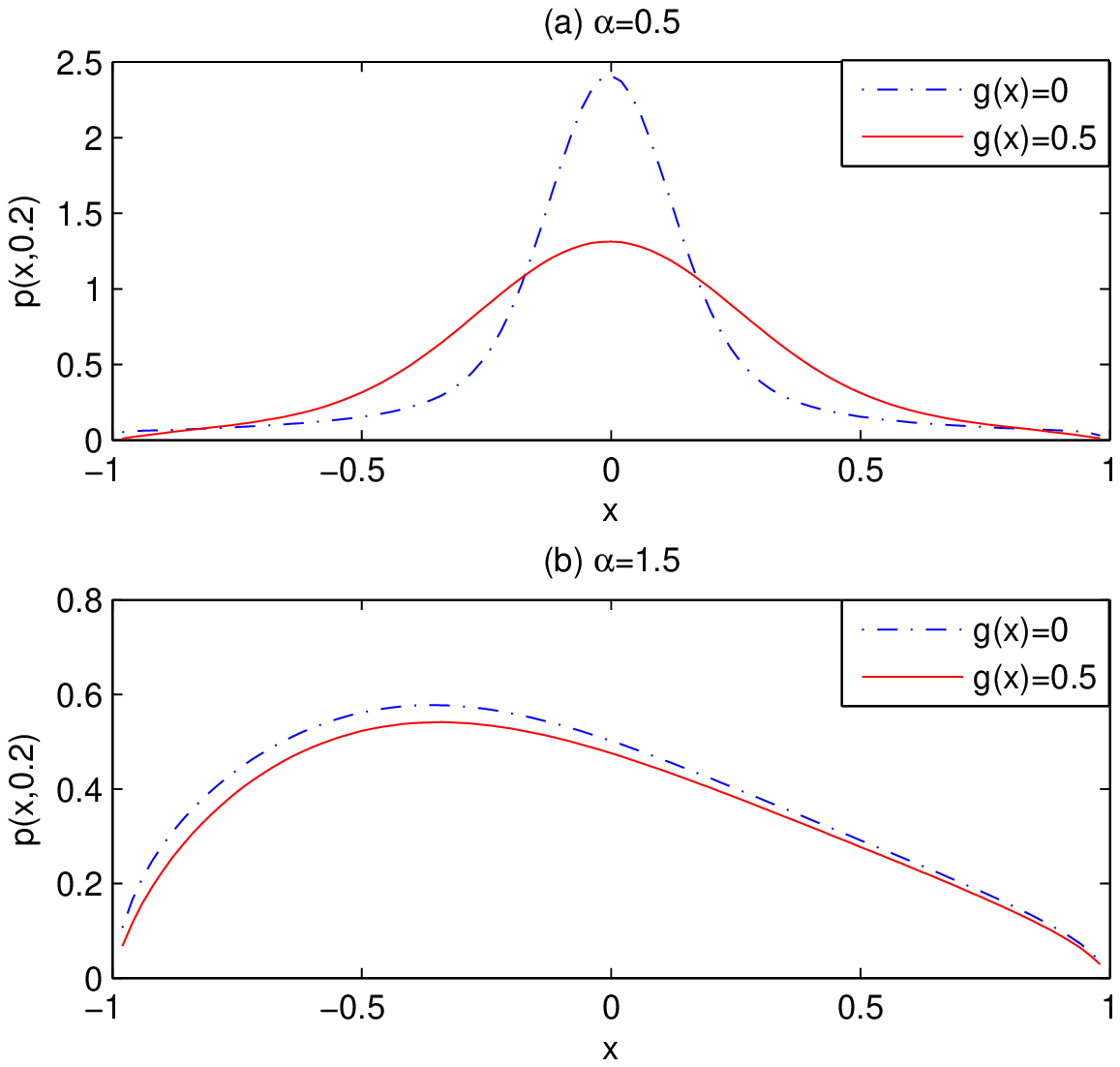}
\end{center}
\caption{The FPE driven by multiplicative $\alpha$-stable L\'evy motions with $f(x)=-0.2x$
and $ D=(-1,1), t=0.2$ for different $\alpha=0.5, 1.5$ and $d=0, 0.5$. The initial condition is $p(x,0.01) = \sqrt{\frac{40}{\pi}}e^{-40x^2}$. }
\label{multFPEdifd}
\enfig
 Here, we examine the FPE driven by multiplicative  $\alpha$-stable L\'evy motions with absorbing condition. We take the initial condition $p(x,0.01) = \sqrt{\frac{40}{\pi}}e^{-40x^2}$ and $ D=(-1,1) $ at time $t=0.2$. Fig.~\ref{multFPEdifd} shows the solutions of  Eq.~(\ref{density0}) for different $\alpha$ and $g(x)$. It suggests that, the Gaussian noise intensity is larger, the solution becomes smaller near the origin for $\a=0.5$, while it is larger far away from the origin.
For $\a=1.5$, the solution is  smaller as the Gaussian noise intensity becomes larger.
It seems the Gaussian noise have greater effect than the  multiplicative L\'evy noise for $\a=0.5$.
But, it is opposite for $\a=1.5$.

\section{Extending FPE to asymmetric case}
%\subsection{Framework}
\setcounter{equation}{0}
 In this section, we consider the SDEs driven by multiplicative asymmetric one-dimensional L\'evy motions and derive the corresponding FPEs. Moreover, an efficient numerical scheme for the probability density function is developed. For an asymmetric $\a$-stable one-dimensional L\'evy motion, the L\'evy measure $\nu_{\a,\beta}$ is given in \cite{Xiao2019} as follow,
\bess
   \nu_{\a,\beta}({\rm d}y) = \frac{C_p(\beta) 1_{\{0<y<\infty\}}(y)+C_n(\beta) 1_{\{-\infty<y<0\}}(y)}{|y|^{1+\alpha}} {\rm d}y,
\eess
where the parameter $\beta \in (-1,1)$ represents the non-symmetry of  $\nu_{\a,\beta}$, and
\bess
     C_p(\beta) = C_{\a}\frac{1+\beta}2, \quad C_n(\beta)= C_{\a}\frac{1-\beta}2,
\eess
with
\bess
    C_{\a} =
    \begin{cases}
        \frac{\a(1-\a)}{\Gamma(2-\a)\cos{(\frac{\pi \a}2)}}\;,   &\text{ $ \a \neq 1 $,}\\
        \frac2{\pi},  \,  &\text{ $ \a = 1$.}
    \end{cases}
\eess
Here we introduce the notation $f(x)\uparrow_{+} x$, which represents the function $f(x)$ is positive and strictly monotonically increasing in $x$. Similarly, the notation $f(x)\downarrow_{-}x$ represents the function $f(x)$ is negative and strictly monotonically decreasing in $x$.
\subsection{Derivation of FPE for asymmetric case}

Let us consider the following stochastic differential equation
\begin{equation}
\label{ori}
 d{X_t} = {f}({X_t})dt + {g}({X_t})d{B_t} + \sigma({X_{t-}})dL_t^{{\alpha, \beta}},\quad X_0=x_0.
\end{equation}
Then the generator for the SDE \eqref{ori} driven by asymmetric $\alpha$-stable L\'evy motion is
\begin{equation*}
\begin{aligned}
{B}\varphi(x) &=f(x)\varphi^{'}(x)+\frac{1}{2}g^{2}(x)\varphi^{''}(x)+\int_{{\mathbb{R}}\backslash \{ 0\}}\left[\varphi\left(x+\sigma(x)y \right)-\varphi(x)-\sigma(x)y\varphi^{'}(x){I}_{\{|y|<1\}}(y)\right]\nu_{\alpha,\beta}(dy)\\
&:=K_1+K_2+K_3.
\end{aligned}
\end{equation*}
In the following, we aim at finding the adjoint operator $B^{*}$ in the Hilbert space $ L^{2}(\mathbb{R})$.
Here we need an additional hypothesis.
\par
{\bf Hypothesis \bf{H4}. }(Condition for noise intensity) The function $\sigma$ is strict monotone.
\begin{remark}
In fact, Hypothesis $\bf{H4}$ can be weakened. We only need  the  inverse function of $\sigma(x)$  exists.
\end{remark}
Also, the adjoint parts for the
first two terms $K_1$ and $K_2$ in $ B$ are easy to find via integration by parts: $-\left(f(x)\varphi(x)\right)^{'}$ and $\frac{1}{2}\left(g^{2}(x)\varphi\right)^{''}$. For the third term $K_3$, we denote it by $\widetilde{{Q}}\varphi$, i.e.,
\begin{equation*}
\widetilde{{Q}}\varphi(x)=\int_{{\mathbb{R}}\backslash \{ 0\}}\left[\varphi\left(x+\sigma(x)y \right)-\varphi(x)-\sigma(x)y\varphi^{'}(x){I}_{\{|y|<1\}} (y)\right]\nu_{\alpha,\beta}(dy).
\end{equation*}
We have the following lemma.
\begin{lemma}
Under Hypotheses $\bf{H1}$-$\bf{H4}$, the adjoint operator of $\widetilde{{Q}}$ is
\begin{equation*}
\begin{aligned}
\widetilde{Q}^{*}v(x)=&\ \int_{{\mathbb{R}}\backslash \{ 0\}}\left[|\sigma(x+z)|^{\alpha}v(x+z)-|\sigma(x)|^{\alpha}v(x)-\left[|\sigma(x)|^{\alpha}v(x)\right]^{'}z{I}_{\{|z|<|\sigma(x)|\}}(z)\right]\tilde{\nu}_{\alpha,\beta}(dz)\\
&\ +\beta C_{\alpha}v(x)\sigma^{'}(x),
\end{aligned}
\end{equation*}
where
$$ \tilde{\nu}_{\alpha,\beta}(dz)=
\begin{cases}
\nu_{\alpha,-\beta}(dz), &\text{if }\sigma(x)\uparrow_{+} x, \\
\nu_{\alpha,\beta}(dz),  &\text{if }\sigma(x)\downarrow_{-} x. \\
\end{cases}
$$
\end{lemma}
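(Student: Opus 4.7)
The plan is to imitate the symmetric-case computation from Lemma~1, but with two extra complications caused by the asymmetric L\'evy measure and by Hypothesis~H4. First I would start from $\int_{\mathbb{R}} \widetilde{Q}\varphi(x)\,v(x)\,dx$ and, at fixed $x$, apply the change of variables $z=\sigma(x)y$ inside the $\nu_{\alpha,\beta}$-integral. Under Hypotheses~H3--H4, $\sigma$ never vanishes and is strictly monotone, so it keeps a constant sign, and the measure transforms as $\nu_{\alpha,\beta}(dy)=|\sigma(x)|^{\alpha}\nu^{\sharp}(dz)$, where $\nu^{\sharp}=\nu_{\alpha,\beta}$ when $\sigma(x)\uparrow_{+}x$ and $\nu^{\sharp}=\nu_{\alpha,-\beta}$ when $\sigma(x)\downarrow_{-}x$, since the substitution swaps the $C_p,C_n$ coefficients precisely when $\sigma<0$. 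The truncation $I_{\{|y|<1\}}$ becomes $I_{\{|z|<|\sigma(x)|\}}$, and $\nu^{\sharp}$ is now independent of $x$, so Fubini can be applied (in the principal-value sense) to interchange the $x$- and $z$-integrals.

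Next I would treat the transformed integrand term by term, mirroring the symmetric proof. For the jump term $\int v(x)|\sigma(x)|^{\alpha}\varphi(x+z)\,dx$, the substitution $x\to x-z$ transfers the shift from $\varphi$ onto $v(\cdot)|\sigma(\cdot)|^{\alpha}$. For the compensator term $-z\varphi'(x)\,I_{\{|z|<|\sigma(x)|\}}$, an integration by parts in $x$ moves the derivative off $\varphi$. Since $|\sigma|$ is strictly increasing in both sub-cases of Hypothesis~H4 (in case~1 because $\sigma>0$ and $\sigma'>0$; in case~2 because $\sigma<0$ and $\sigma'<0$), for each $z$ there is a unique $x^{*}(z)$ with $|\sigma(x^{*}(z))|=|z|$ and $I_{\{|z|<|\sigma(x)|\}}=I_{\{x>x^{*}(z)\}}$. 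Its $x$-derivative is the Dirac mass $\delta(x-x^{*}(z))$, so the integration by parts produces a regular piece involving $[v(x)|\sigma(x)|^{\alpha}]'\,I_{\{|z|<|\sigma(x)|\}}$ together with a singular boundary contribution
\[
-\int \nu^{\sharp}(dz)\,z\,\varphi(x^{*}(z))\,v(x^{*}(z))\,|z|^{\alpha}.
\]

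To evaluate this boundary term, I would change variables $z=\pm|\sigma(x)|$, splitting according to the sign of $z$. The positive half contributes $+C_{p}^{\sharp}\int\varphi\, v\,|\sigma|'\,dx$ and the negative half contributes $-C_{n}^{\sharp}\int\varphi\, v\,|\sigma|'\,dx$, where $C_{p}^{\sharp}-C_{n}^{\sharp}=\beta_{\sigma}C_{\alpha}$ with $\beta_{\sigma}=\beta$ in case~1 and $\beta_{\sigma}=-\beta$ in case~2. A short case check using $|\sigma|'=\sigma'$ in case~1 and $|\sigma|'=-\sigma'$ in case~2 shows that $\beta_{\sigma}|\sigma|'=\beta\sigma'$ in both cases, producing exactly the drift correction $\beta C_{\alpha}v(x)\sigma'(x)$. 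A final substitution $z\to-z$ in the remaining $\nu^{\sharp}$-integral reflects the measure to $\tilde{\nu}_{\alpha,\beta}(dz)$ as defined in the lemma ($\nu^{\sharp}(-dz)=\nu_{\alpha,-\beta}(dz)=\tilde{\nu}_{\alpha,\beta}(dz)$ in case~1 and analogously in case~2), and reading off the coefficient of $\varphi(x)$ delivers the stated formula for $\widetilde{Q}^{*}v$.

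The main obstacle is precisely this boundary/delta contribution: it is the interaction of the Dirac mass from the $x$-dependent truncation with the asymmetric coefficients $C_{p}-C_{n}=\beta C_{\alpha}$ that generates the extra drift $\beta C_{\alpha}v\sigma'$, and the sign bookkeeping must be tracked carefully across the two sub-cases of Hypothesis~H4 so that the final correction comes out identical in both. The rest of the calculation is a routine extension of the symmetric argument of Lemma~1.
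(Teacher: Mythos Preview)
Your approach is essentially identical to the paper's: change of variables $z=\sigma(x)y$, Fubini, translation $x\mapsto x-z$ on the jump term, integration by parts in $x$ on the compensator term (the $x$-dependent cutoff producing the boundary contribution $\beta C_\alpha v\sigma'$), and a final reflection $z\mapsto -z$ to land on $\tilde\nu_{\alpha,\beta}$. One small slip: the displayed boundary term should carry a $+$ rather than a $-$ (with the correct sign your subsequent evaluation of the two halves as $+C_p^{\sharp}$ and $-C_n^{\sharp}$ becomes consistent), but the rest of your bookkeeping and the final formula are correct.
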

\begin{proof}
We assume $\sigma(x)\uparrow_{+} x$. By the definition of adjoint operator and the transformation $z=\s(x) y$, we have
\begin{equation*}
\begin{aligned}
&\int_{\mathbb{R}}\left\{\int_{{\mathbb{R}}\backslash \{ 0\}}\left[\varphi\left(x+\sigma(x)y\right)-\varphi(x)-\sigma(x)y\varphi^{'}(x){I}_{\{|y|<1\}}(y)\right]\nu_{\alpha,\beta}(dy)\right\}v(x)dx \\
&=\int_{\mathbb{R}}\left\{\int_{{\mathbb{R}}\backslash \{ 0\}}\left[\varphi\left(x+z\right)-\varphi(x)-z\varphi^{'}(x){I}_{\{|z|<\sigma(x)\}}(z)\right]\nu_{\alpha,\beta}(dz)\right\}(\sigma(x))^{\alpha}v(x)dx \\
&=\int_{{\mathbb{R}}\backslash \{ 0\}}\left\{\int_{\mathbb{R}}\left[\varphi\left(x+z\right)-\varphi(x)-z\varphi^{'}(x){I}_{\{|z|<\sigma(x)\}}(z)\right](\sigma(x))^{\alpha}v(x)dx\right\}\nu_{\alpha,\beta}(dz) \\
&=\int_{{\mathbb{R}}\backslash \{ 0\}}\left\{\int_{-\infty}^{\infty}\varphi\left(x\right)(\sigma(x-z))^{\alpha}v(x-z)dx-\int_{-\infty}^{\infty}\varphi(x)(\sigma(x))^{\alpha}v(x)dx \right.\\
&\phantom{=\;\;}\left.-z\int_{-\infty}^{\infty}\varphi^{'}(x){I}_{\{|z|<\sigma(x)\}}(z)(\sigma(x))^{\alpha}v(x)dx\right\}\nu_{\alpha,\beta}(dz)\\
&=\int_{{\mathbb{R}}\backslash \{ 0\}}\left\{\int_{-\infty}^{\infty}\varphi\left(x\right)(\sigma(x-z))^{\alpha}v(x-z)dx-\int_{-\infty}^{\infty}\varphi(x)(\sigma(x))^{\alpha}v(x)dx \right.\\ &\phantom{=\;\;}\left.+\int_{-\infty}^{\infty}z\varphi(x){I}_{\{|z|<\sigma(x)\}}(z)\left[(\sigma(x))^{\alpha}v(x)\right]^{'}dx\right\}\nu_{\alpha,\beta}(dz)\\
&+\int_{{\mathbb{R}}\backslash \{ 0\}}z|z|^{\alpha}\varphi\left(\sigma^{-1}(|z|)\right)v\left(\sigma^{-1}(|z|)\right)\nu_{\alpha,\beta}(dz)\\
&=\left\langle \varphi(x), \int_{{\mathbb{R}}\backslash \{ 0\}}\left[(\sigma(x-z))^{\alpha}v(x-z)-(\sigma(x))^{\alpha}v(x)+\left[(\sigma(x))^{\alpha}v(x)\right]^{'}z{I}_{\{|z|<\sigma(x)\}}(z)\right]\nu_{\alpha,\beta}(dz)\right\rangle \\
&+\beta C_{\alpha}\left\langle \varphi(x), v(x)\sigma^{'}(x) \right\rangle,
\end{aligned}
\end{equation*}
then the adjoint operator of $\widetilde{Q}$ is
\begin{equation}
\label{q1}
\begin{aligned}
&\widetilde{Q}^{*}v(x)=\beta C_{\alpha}v(x)\sigma^{'}(x)\\
&+\int_{{\mathbb{R}}\backslash \{ 0\}}\left[(\sigma(x-z))^{\alpha}v(x-z)-(\sigma(x))^{\alpha}v(x)+\left[(\sigma(x))^{\alpha}v(x)\right]^{'}z{I}_{\{|z|<\sigma(x)\}}(z)\right]\nu_{\alpha,\beta}(dz)\\
&=\beta C_{\alpha}v(x)\sigma^{'}(x)\\
&+\int_{{\mathbb{R}}\backslash \{ 0\}}\left[(\sigma(x+z))^{\alpha}v(x+z)-(\sigma(x))^{\alpha}v(x)-\left[(\sigma(x))^{\alpha}v(x)\right]^{'}z{I}_{\{|z|<\sigma(x)\}}(z)\right]\nu_{\alpha,-\beta}(dz).
\end{aligned}
\end{equation}
Similarly, when $\sigma(x)\downarrow_{-} x$ , we have
\begin{equation*}
\begin{aligned}
&\int_{\mathbb{R}}\left\{\int_{{\mathbb{R}}\backslash \{ 0\}}\left[\varphi\left(x+\sigma(x)y\right)-\varphi(x)-\sigma(x)y\varphi^{'}(x){I}_{\{|y|<1\}}(y)\right]\nu_{\alpha,\beta}(dy)\right\}v(x)dx \\
&=\int_{-\infty}^{\infty}\left\{\int_{{\mathbb{R}}\backslash \{ 0\}}\left[\varphi\left(x+z\right)-\varphi(x)-z\varphi^{'}(x){I}_{\{|z|<-\sigma(x)\}}(z)\right]\nu_{\alpha,-\beta}(dz)\right\}(-\sigma(x))^{\alpha}v(x)dx \\
%&=\int_{{\mathbb{R}}\backslash \{ 0\}}\left\{\int_{-\infty}^{\infty}\left[\varphi\left(x+z\right)-\varphi(x)-z\varphi^{'}(x){I}_{\{|z|<-\sigma(x)\}}(z)\right](-\sigma(x))^{\alpha}v(x)dx \right\}\nu_{\alpha,-\beta}(dz)\\
%&=\int_{{\mathbb{R}}\backslash \{ 0\}}\left\{\int_{-\infty}^{\infty}\varphi\left(x\right)(-\sigma(x-z))^{\alpha}v(x-z)dx-\int_{-\infty}^{\infty}\varphi(x)(-\sigma(x))^{\alpha}v(x)dx
%\right.\\ &\phantom{=\;\;}\left.-\int_{-\infty}^{\infty}z\varphi^{'}(x)(-\sigma(x))^{\alpha}v(x){I}_{\{|z|<-\sigma(x)\}}(z)dx \right\}\nu_{\alpha,-\beta}(dz)\\
&=\int_{{\mathbb{R}}\backslash \{ 0\}}\left\{\int_{-\infty}^{\infty}\varphi\left(x\right)(-\sigma(x-z))^{\alpha}v(x-z)dx-\int_{-\infty}^{\infty}\varphi(x)(-\sigma(x))^{\alpha}v(x)dx \right.\\ &\phantom{=\;\;}\left.+\int_{-\infty}^{\infty}z\varphi(x){I}_{\{|z|<-\sigma(x)\}}(z)\left[(-\sigma(x))^{\alpha}v(x)\right]^{'}dx\right\}\nu_{\alpha,-\beta}(dz)\\
&+\int_{{\mathbb{R}}\backslash \{ 0\}}z|z|^{\alpha}\varphi\left(\sigma^{-1}(-|z|)\right)v\left(\sigma^{-1}(-|z|)\right)\nu_{\alpha,-\beta}(dz) \\
&=\left\langle \varphi(x), \int_{{\mathbb{R}}\backslash \{ 0\}}\left[(-\sigma(x-z))^{\alpha}v(x-z)-(-\sigma(x))^{\alpha}v(x)+\left[(-\sigma(x))^{\alpha}v(x)\right]^{'}z{I}_{\{|z|<-\sigma(x)\}}(z)\right]\nu_{\alpha,-\beta}(dz)\right\rangle \\
&+\beta C_{\alpha}\left\langle \varphi(x), v(x)\sigma^{'}(x) \right\rangle,
\end{aligned}
\end{equation*}
then the adjoint operator of $\widetilde{Q}$ is
\begin{equation}
\label{q2}
\begin{aligned}
&\widetilde{Q}^{*}v(x)=\beta C_{\alpha}v(x)\sigma^{'}(x) \\
&+\int_{{\mathbb{R}}\backslash \{ 0\}}\left[(-\sigma(x-z))^{\alpha}v(x-z)-(-\sigma(x))^{\alpha}v(x)+\left[(-\sigma(x))^{\alpha}v(x)\right]^{'}z{I}_{\{|z|<-\sigma(x)\}}(z)\right]\nu_{\alpha,-\beta}(dz)\\
&=\beta C_{\alpha}v(x)\sigma^{'}(x) \\
&+\int_{{\mathbb{R}}\backslash \{ 0\}}\left[(-\sigma(x+z))^{\alpha}v(x+z)-(-\sigma(x))^{\alpha}v(x)-\left[(-\sigma(x))^{\alpha}v(x)\right]^{'}z{I}_{\{|z|<-\sigma(x)\}}(z)\right]\nu_{\alpha,\beta}(dz).
\end{aligned}
\end{equation}
Combined with \eqref{q1} and \eqref{q2}, we have
\begin{equation*}
\begin{aligned}
\widetilde{Q}^{*}v(x)&=\int_{{\mathbb{R}}\backslash \{ 0\}}\left[|\sigma(x+z)|^{\alpha}v(x+z)-|\sigma(x)|^{\alpha}v(x)-\left[|\sigma(x)|^{\alpha}v(x)\right]^{'}z{I}_{\{|z|<|\sigma(x)|\}}(z)\right]\tilde{\nu}_{\alpha,\beta}(dz)\\
&+\beta C_{\alpha}v(x)\sigma^{'}(x).
\end{aligned}
\end{equation*}
\end{proof}

By Lemma 2, we can derive the FPE in case of multiplicative asymmetric $\alpha$-stable noise.
\begin{theorem}
Under Hypotheses $\bf{H1}$-$\bf{H4}$, the Fokker-Planck equation for the SDE \eqref{ori} in the case that $L$ is an asymmetric one-dimensional $\alpha$-stable process is
\begin{equation}
\label{densityAsy}
p_t=B^{*}p, ~~~~p(x,0)=\delta(x-x_0),
\end{equation}
where
\begin{equation*}
\begin{aligned}
B^{*}p(x,t)&=-\partial_{x}(f(x)p(x,t))+\frac{1}{2}\partial_{xx}(g^{2}(x)p(x,t))+\widetilde{Q}^{*}p(x,t).\\
\end{aligned}
\end{equation*}
\end{theorem}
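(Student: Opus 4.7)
The plan is to mirror the derivation of Theorem 1, since the statement is the natural asymmetric counterpart and all the heavy lifting has already been done in Lemma 2. First I would invoke Dynkin's formula (an immediate consequence of the It\^o formula for L\'evy-type SDEs): for any test function $\varphi \in C_0^2(\mathbb{R})$,
\begin{equation*}
\EX[\varphi(X_t)] = \varphi(x_0) + \int_0^t \EX[B\varphi(X_s)] \, ds.
\end{equation*}
Writing both expectations as integrals against the density $p(\cdot,t)$ of $X_t$ and differentiating in $t$ yields the weak form
\begin{equation*}
\int_{\mathbb{R}} \varphi(x) \, p_t(x,t) \, dx = \int_{\mathbb{R}} B\varphi(x) \, p(x,t) \, dx.
\end{equation*}

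Second, I would transfer $B$ onto $p$ term by term, following the decomposition $B = K_1 + K_2 + K_3$ introduced just before Lemma 2. For the drift piece $K_1\varphi = f(x)\varphi'(x)$ and the diffusion piece $K_2\varphi = \tfrac12 g^2(x)\varphi''(x)$, a classical integration by parts (the boundary terms vanishing because $\varphi \in C_0^2$) produces the local adjoints $-\partial_x(f(x)p)$ and $\tfrac12 \partial_{xx}(g^2(x)p)$, exactly as in the symmetric case. For the nonlocal piece $K_3 = \widetilde{Q}$, I would invoke Lemma 2 directly: under Hypothesis \textbf{H4} its adjoint is the explicit operator $\widetilde{Q}^*$ given there, which already packages both monotonicity cases $\sigma \uparrow_{+} x$ and $\sigma \downarrow_{-} x$ through the measure $\tilde{\nu}_{\alpha,\beta}$ and contributes the extra asymmetry-driven term $\beta C_\alpha v(x) \sigma'(x)$.

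Combining the three adjoint parts gives $\int \varphi \, p_t \, dx = \int \varphi \, B^* p \, dx$ with $B^* p = -\partial_x(f p) + \tfrac12 \partial_{xx}(g^2 p) + \widetilde{Q}^* p$. Since $\varphi$ ranges over a dense class in a suitable test space, this forces $p_t = B^* p$ in the weak sense. The initial condition $p(x,0) = \delta(x-x_0)$ follows from $X_0 = x_0$ almost surely.

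The main obstacle has already been dispatched in Lemma 2: it is the careful change of variables $z = \sigma(x) y$ inside the nonlocal integral, which uses Hypothesis \textbf{H4} to guarantee invertibility of $\sigma$ and which produces the extra boundary contribution $\beta C_\alpha v(x) \sigma'(x)$ due to the asymmetry of $\nu_{\alpha,\beta}$. Given that lemma, the only remaining technical point is the usual one of justifying the interchange $\tfrac{d}{dt}\int \varphi \, p \, dx = \int \varphi \, p_t \, dx$ and the integration by parts on an unbounded domain; these are handled by standard density/approximation arguments using the regularity assumed on $f$, $g$, $\sigma$ and the growth control furnished by Hypothesis \textbf{H2}.
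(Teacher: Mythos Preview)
Your proposal is correct and follows essentially the same approach as the paper: the paper treats this theorem as an immediate consequence of Lemma 2, stating only ``By Lemma 2, we can derive the FPE in case of multiplicative asymmetric $\alpha$-stable noise'' with no further argument, exactly paralleling how Theorem 1 was obtained from Lemma 1. Your write-up simply makes explicit the standard adjoint-operator derivation that the paper leaves tacit.
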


\subsection{Numerical method}
Here, we present the numerical schemes for the absorbing boundary condition.

Let $u(x,t)=|\s(x)|^{\a}p(x,t)$, then we have
\begin{equation}
\label{sum}
\begin{aligned}
&\ \int_{{\mathbb{R}}\backslash \{ 0\}}\left[|\sigma(x+z)|^{\alpha}p(x+z,t)-|\sigma(x)|^{\alpha}p(x,t)-\partial_{x}\left[|\sigma(x)|^{\alpha}p(x,t)\right]z{I}_{\{|z|<|\sigma(x)|\}}(z)\right]\tilde{\nu}_{\alpha,\beta}(dz)\\
= &\ \int_{{\mathbb{R}}\backslash \{  0\}}\left[u(x+z,t)-u(x,t)-\partial_{x}(u(x,t)) z{I}_{\{|z|<|\sigma(x)|\}}(z)\right]\tilde{\nu}_{\alpha,\beta}(dz)\\
=:&\ I.
\end{aligned}
\end{equation}
In the sequel, we will discuss \eqref{sum} in four cases.

Case $1$: $ \sigma(x)>0$ and $\sigma(x)>1-x$. We have
%\begin{equation*}
%\begin{aligned}
%I=&\ \int_{{\mathbb{R}}\backslash \{  0\}}\left[u(x+z,t)-u(x,t)-\partial_{x}(u(x,t))z{I}_{\{|z|<|\sigma(x)|\}}(z)\right]{\nu}_{\alpha,-\beta}(dz)\\
%=&\ {\rm{{\mathbf{P.V}.}}}~  C_{p}(\beta)\int_{\mathbb{R}\backslash \{  0\}}\frac{u(x+z,t)-u(x,t)}{|z|^{1+\alpha}}dz\\
%&\ -\beta C_{\alpha}\int_{\mathbb{R}^{+}}\left[u(x+z,t)-u(x,t)-\partial_{x}(u(x,t))zI_{|z|<\sigma(x)}(z)\right]\frac{1}{z^{1+\alpha}}dz \\
%=:&\ C_{p}(\beta)N_1-\beta C_{\alpha}N_2.
%\end{aligned}
%\end{equation*}
%where
%\begin{equation*}
%\begin{aligned}
%N_1&=\int_{-1-x}^{1-x}\frac{u(x+z,t)-u(x,t)}{|z|^{1+\alpha}}dz-\frac{u(x,t)}{\alpha}\left[(1-x)^{-\alpha}+(1+x)^{-\alpha}\right],\\
%N_2&= \int_{0}^{1-x}\big[u(x+z,t)-u(x,t)-z\partial_{x}(u(x,t))\big]\frac{1}{z^{1+\alpha}}dz-\frac{u(x,t)}{\alpha}(1-x)^{-\alpha}\\
%&\ -\frac{\partial_{x}(u(x,t))}{1-\alpha}\left[(\sigma(x))^{1-\alpha}-(1-x)^{1-\alpha}\right].
%\end{aligned}
%\end{equation*}
%
%Therefore,
\begin{equation*}
\begin{aligned}
I=&\ C_{p}(\beta)\left\{\int_{-1-x}^{1-x}\frac{u(x+z,t)-u(x,t)}{|z|^{1+\alpha}}dz-\frac{u(x,t)}{\alpha}\left[(1-x)^{-\alpha}+(1+x)^{-\alpha}\right]\right\}\\
&\ -\beta C_{\alpha}\left\{\int_{0}^{1-x}\big[u(x+z,t)-u(x,t)-z\partial_{x}(u(x,t))\big]\frac{1}{z^{1+\alpha}}dz-\frac{u(x,t)}{\alpha}(1-x)^{-\alpha}\right.\\ & \phantom{=\;\;}\left.
-\frac{\partial_{x}(u(x,t))}{1-\alpha}\left[(\sigma(x))^{1-\alpha}-(1-x)^{1-\alpha}\right]\right\}.
\end{aligned}
\end{equation*}

Case $2$: $ \sigma(x)>0$ and $\sigma(x)<1-x$. We have
\begin{equation*}
\begin{aligned}
I=&\ C_{p}(\beta)\left\{\int_{-1-x}^{1-x}\frac{u(x+z,t)-u(x,t)}{|z|^{1+\alpha}}dz-\frac{u(x,t)}{\alpha}\left[(1-x)^{-\alpha}+(1+x)^{-\alpha}\right]\right\}\\
&\ -\beta C_{\alpha}\left\{\int_{0}^{\sigma(x)}\big[u(x+z,t)-u(x,t)-z\partial_{x}(u(x,t))\big]\frac{1}{z^{1+\alpha}}dz
-\frac{u(x,t)}{\alpha}(1-x)^{-\alpha}
\right.\\ &\phantom{=\;\;}\left.+\int_{\sigma(x)}^{1-x}\left[u(x+z,t)-u(x,t)\right]\frac{1}{z^{1+\alpha}}dz\right\}.
\end{aligned}
\end{equation*}

Case $3$: $ \sigma(x)<0$ and $-\sigma(x)>1-x$. We have

\begin{equation*}
\begin{aligned}
I=&\ C_{n}(\beta)\left\{\int_{-1-x}^{1-x}\frac{u(x+z,t)-u(x,t)}{|z|^{1+\alpha}}dz-\frac{u(x,t)}{\alpha}\left[(1-x)^{-\alpha}+(1+x)^{-\alpha}\right]\right\}\\
&\ +\beta C_{\alpha}\left\{\int_{0}^{1-x}\big[u(x+z,t)-u(x,t)-z\partial_{x}(u(x,t))\big]\frac{1}{z^{1+\alpha}}dz-\frac{u(x,t)}{\alpha}(1-x)^{-\alpha}\right.\\ &\phantom{=\;\;}\left.
-\frac{\partial_{x}(u(x,t))}{1-\alpha}\left[(-\sigma(x))^{1-\alpha}-(1-x)^{1-\alpha}\right]\right\}.
\end{aligned}
\end{equation*}

Case $4$: $ \sigma(x)<0$ and $-\sigma(x)<1-x$. We have
\begin{equation*}
\begin{aligned}
I =&\ C_{n}(\beta)\left\{\int_{-1-x}^{1-x}\frac{u(x+z,t)-u(x,t)}{|z|^{1+\alpha}}dz-\frac{u(x,t)}{\alpha}\left[(1-x)^{-\alpha}+(1+x)^{-\alpha}\right]\right\}\\
&\ +\beta C_{\alpha}\left\{\int_{0}^{-\sigma(x)}\big[u(x+z,t)-u(x,t)-z\partial_{x}(u(x,t))\big]\frac{1}{z^{1+\alpha}}dz
-\frac{u(x,t)}{\alpha}(1-x)^{-\alpha}
\right.\\ &\ \phantom{=\;\;}\left.+\int_{-\sigma(x)}^{1-x}\left[u(x+z,t)-u(x,t)\right]\frac{1}{z^{1+\alpha}}dz\right\}.
\end{aligned}
\end{equation*}
Let
\bess
   C_{1,\beta}(x)&=& C_p(\beta) 1_{\{\s(x)>0\}}(x)+C_n(\beta)1_{\{\s(x)<0\}}(x),\\
   C_{2,\beta}(x)&=&\beta C_\a 1_{\{\s(x)<0\}}(x)- \beta C_\a1_{\{\s(x)>0\}}(x),
\eess
then $C_{1,\beta}(x)+C_{2,\beta}(x)= C_{1,-\beta}(x)$.

Therefore, for $|\s(x)|>1-x$, we get
\bess
I&=&C_{1,\beta}(x)\int_{-1-x}^{1-x} \frac{u(x+z,t)-u(x,t)}{|z|^{1+\a}}\dz +C_{2,\beta}(x)\int_0^{1-x} \frac{u(x+z,t)-u(x,t)-z\partial_{x}(u(x,t))}{z^{1+\a}} \dz \\
&& -\frac{C_{1,-\beta}(x)u(x,t)}{\a}(1-x)^{-\a}  -\frac{C_{1,\beta}(x)u(x,t)}{\a}(1+x)^{-\a} \\
&& -\frac{C_{2,\beta}(x)\partial_{x}(u(x,t))}{1-\a} \big[|\s(x)|^{1-\a}-(1-x)^{1-\a}\big],    \\
%&=&C_{1,\beta}(x)\bigg\{\int_{-1-x}^{1-x} \frac{u(x+z,t)-u(x,t)}{|z|^{1+\a}}\dz
%-\frac{u(x,t)}{\a}\left[(1-x)^{-\a}+(1+x)^{-\a}\right]\bigg\} \\
%&&+C_{2,\beta}(x)\bigg\{\int_0^{1-x} \frac{u(x+z,t)-u(x,t)-\partial_{x}(u(x,t))z}{z^{1+\a}} \dz -\frac{u(x,t)}{\a}(1-x)^{-\a} \\
%&&-\frac{\partial_{x}(u(x,t))}{1-\a} \big[|\s(x)|^{1-\a}-(1-x)^{1-\a}\big]   \bigg\},
\eess
and for $|\s(x)|<1-x$, we have
\bess
I&=&C_{1,\beta}(x)\int_{-1-x}^{1-x} \frac{u(x+z,t)-u(x,t)}{|z|^{1+\a}}\dz+C_{2,\beta}(x)\bigg\{\int_{0}^{|\s(x)|} \frac{u(x+z,t)-u(x,t)-\partial_{x}(u(x,t))z}{z^{1+\a}} \dz \\
&&+\int_{|\s(x)|}^{1-x}  \frac{u(x+z,t)-u(x,t)}{z^{1+\a}} \dz  \bigg\}-\frac{C_{1,-\beta}(x)u(x,t)}{\a}(1-x)^{-\a}-\frac{C_{1,\beta}(x)u(x,t)}{\a}\left(1+x\right)^{-\a}. \\
%&=&C_{1,\beta}(x)\bigg\{\int_{-1-x}^{1-x} \frac{u(x+z,t)-u(x,t)}{|z|^{1+\a}}\dz -\frac{u(x,t)}{\a}[(1-x)^{-\a}+(1+x)^{-\a}]\bigg\} \\
%&&+C_{2,\beta}(x)\bigg\{\int_{0}^{|\s(x)|} \frac{u(x+z,t)-u(x,t)-\partial_{x}(u(x,t))z}{z^{1+\a}} \dz - \frac{u(x,t)}{\a(1-x)^\a} \\
%&&+\int_{|\s(x)|}^{1-x}  \frac{u(x+z,t)-u(x,t)}{z^{1+\a}} \dz  \bigg\}.
\eess
Then Eq.~(\ref{densityAsy}) becomes
\begin{equation*}
\begin{aligned}
u_t &=-|\s(x)|^{\a}\partial_{x}\left(\frac{f(x)}{|\s(x)|^\a}u(x,t)\right)+\frac{1}{2}|\s(x)|^{\a}\partial_{xx}\left(\frac{g^{2}(x)}{|\s(x)|^\a}u(x,t)\right) +|\s(x)|^\a I +\beta C_\a \s'(x) u(x,t) \\
&=\frac{1}{2}g^{2}(x)\partial_{xx}(u(x,t))+M(x)\partial_{x}(u(x,t))+\left(N(x)+\beta C_\a \s'(x)\right)u(x,t)+|\s(x)|^\a I.
\end{aligned}
\end{equation*}
By the absorbing boundary condition, for $|\s(x)|>1-x$, we have
\bear
\label{density3}
u_t &=&\frac{1}{2}g^{2}(x)\partial_{xx}(u(x,t))+\hat{M}(x)\partial_{x}(u(x,t))+\hat{N}(x)u(x,t) \nonumber \\
    &&+|\s(x)|^\a C_{1,\beta}(x)\int_{-1-x}^{1-x} \frac{u(x+z,t)-u(x,t)}{|z|^{1+\a}}\dz \\
    &&+|\s(x)|^\a C_{2,\beta}(x)\int_0^{1-x} \frac{u(x+z,t)-u(x,t)-\partial_{x}(u(x,t))z}{z^{1+\a}} \dz, \nonumber
\enar
and for $|\s(x)|<1-x$, we have
\begin{equation}
\label{density5}
\begin{aligned}
u_t =&\ \frac{1}{2}g^{2}(x)\partial_{xx}(u(x,t))+ \hat{M}(x)\partial_{x}(u(x,t))+\hat{N}(x)u(x,t)\\
 &\ +|\s(x)|^\a C_{1,\beta}(x)\int_{-1-x}^{1-x} \frac{u(x+z,t)-u(x,t)}{|z|^{1+\a}}\dz  \\
&\ +|\s(x)|^\a C_{2,\beta}(x)\left\{\int_{0}^{|\s(x)|} \frac{u(x+z,t)-u(x,t)-\partial_{x}(u(x,t))z}{z^{1+\a}} \dz
\right.\\ &\phantom{=\;\;}\left.+\int_{|\s(x)|}^{1-x}  \frac{u(x+z,t)-u(x,t)}{z^{1+\a}} \dz  \right\},
\end{aligned}
\end{equation}
where
\bess
     \hat{M}(x) =
    \begin{cases}
       M(x) - |\s(x)|^\a \frac{C_{2,\beta}(x)}{1-\a} \big[|\s(x)|^{1-\a}-(1-x)^{1-\a}\big],   &\text{ $ |\s(x)|>1-x$,}\\
       M(x),  \;  &\text{ $ |\s(x)|<1-x$,}
    \end{cases}
\label{m1}
\eess
and
\begin{equation*}
   \hat{N}(x)= N(x)+\beta C_\a \s'(x) -|\s(x)|^\a \frac{C_{1,-\beta}(x)}{\a}(1-x)^{-\a} -|\s(x)|^\a \frac{C_{1,\beta}(x) }{\a}(1+x)^{-\a}.
\end{equation*}
Denote $U_j$ as the numerical solution of $u$ for Eq.~(\ref{density3}) and (\ref{density5}) at $(x_j,t)$,
where $x_j=jh$ for $-J< j< J$ and $h = \frac1J$. The unknowns vector by $\mathbf{U}:= (U_{-J+1}, U_{-J+2}, \cdots, U_{J-1})^T$. We approximate the diffusion term by the second-order central differencing and the first-order derivatives by the first-order upwind scheme. Then we can discretize the non-integral terms in the right-hand side (RHS) of Eqs.~\eqref{density3} and \eqref{density5} as
\bess
\left(A\mathbf{U}\right)_j:= C_h\frac{U_{j+1}-2U_j+U_{j-1}}{h^2} + \hat{M}(x_j) \delta_u U_j
     +\hat{N} U_j,\label{eq.B}
\eess
where
\begin{equation*}
   C_h=\frac{g^2(x)}{2}-\frac{ C_\a|\s(x)|^\a}{2} \zeta(\a-1)h^{2-\a}.
\end{equation*}
In fact, the second term in $C_h$ is the leading-order correction term for the trapezoidal rules of the singular integrals in Eqs.~(\ref{density3}) and (\ref{density5}), and $\zeta$ is the Riemann zeta function. The integrals in Eqs.~(\ref{density3}) and (\ref{density5})
are approximated by the trapezoidal rule
\bess
(B\mathbf{U})_j :=
    \begin{cases}
     & \left|\s(x_j)\right|^\a C_{1,\beta}(x_j) h\sum_{k=1-J, k\neq j}^{J-1} \frac{U_k-U_j}{|x_k-x_j|^{1+\a}} \\
    &+ \left|\s(x_j)\right|^\a C_{2,\beta}(x_j) h\sum\!{'}_{k=j+1}^{J} \frac{U_k-U_j-(x_k-x_j)\frac{U_{j}-U_{j-1}}{h}}{|x_k-x_j|^{\a+1}},
     \quad \text{ for $|\s(x_j)|+x_j>1$,}  \\
     &  \left|\s(x_j)\right|^\a C_{1,\beta}(x_j) h\sum_{k=1-J, k\neq j}^{J-1} \frac{U_k-U_j}{|x_k-x_j|^{1+\a}} + |\s(x_j)|^\a C_{2,\beta}(x_j)h\\
    &\left[\sum\!{'}_{k=j+1}^{j+m} \frac{U_k-U_j-(x_k-x_j)
    \frac{U_{j}-U_{j-1}}{h}}{|x_k-s_j|^{\a+1}}+\sum_{k=j+m}^{J} \frac{U_k-U_j}{|x_k-x_j|^{1+\a}}\right],
      ~~\text{for $|\s(x_j)|+x_j<1$,} \\
    \end{cases}
\eess
where $m=\left[\frac{|\s(x_j)|}{h}\right]$ means the integer portion of the value $\frac{|\s(x_j)|}{h}$, the summation symbol $\sum$ means the terms of both end indices are multiplied by $\frac12$,
$\sum\!{'}$ means that only the term of the top index is multiplied by $\frac12$.

Set $\tilde{R}=A+B$, then the semi-discrete scheme for Eqs.~(\ref{density3}) and (\ref{density5}) becomes
\begin{equation}
\frac{{\rm d}U_j}{{\rm d}t} =  (\tilde{R}\mathbf{U})_j,
\label{eq.FPEsd}
\end{equation}
for $-J+1 \leq j \leq J-1$.

In the next, we will show the semi-discrete scheme \eqref{eq.FPEsd} satisfies discrete maximal principle for the absorbing condition. Let
\begin{equation*}
\begin{aligned}
    &I_h = \{j\in \mathbb{Z}:|jh|<1\}, \\
    &I_{h,T}=I_h \times (0,T], \\
&\partial I_{h,T}= \mathbb{Z} \times [0,T] \setminus I_{h,T},
\end{aligned}
\end{equation*}
where $T>0$ is the final time.

\begin{prop}[Maximum principle for the absorbing condition]
  Assume $ \hat{N}(x) \leq 0$, $C_{2,\beta}(x) \geq 0$  for $ x\in (-1,1)$.
  \begin{enumerate}[label=(\roman*)]
   \item\label{prop.1}  If
\begin{equation*}
   \frac{{\rm d}U_j}{{\rm d}t} -  (\tilde{R}\mathbf{U})_j \leq 0, ~~
   \text{  for } (j,t)\in I_{h,T},  \quad
\label{eq.mp1}
\end{equation*}
and
\begin{equation*}
 \quad U_j = 0, ~~\text{ for } |j|\geq J,
\end{equation*}
then we have
\begin{equation*}
	\max_{(j,t)\in \mathbb{Z}\times [0, T]} U_j(t) = \max_{(j,t)\in \partial I_{h,T}} U_j(t).
\end{equation*}
    \item\label{prop.2}

 If
\begin{equation*}
   \frac{{\rm d}U_j}{{\rm d}t} -  (\tilde{R}\mathbf{U})_j \geq 0,
   ~~\text{  for } (j,t) \in I_{h,T},
\label{eq.mp3}
\end{equation*}
and
\begin{equation*}
\quad U_j = 0, ~~~\text{ for } |j|\geq J,
\end{equation*}
then we have
\begin{equation*}
	\min_{(j,t)\in \mathbb{Z}\times [0,T]} U_j(t) = \min_{(j,t)\in \partial I_{h,T}} U_j(t).
\end{equation*}
\end{enumerate}
\end{prop}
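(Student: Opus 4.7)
The plan is to prove (i) by the classical parabolic-type maximum principle argument: perturb $U$ to $V_j(t):=U_j(t)-\epsilon t$ for an arbitrary $\epsilon>0$ and show that $V$ cannot attain its maximum at an interior grid point; part (ii) will then follow by applying (i) to $-U$. Since $\tilde R=A+B$ is linear and a direct calculation gives $(A\mathbf 1)_j=\hat N(x_j)$ (only the zero-order term survives) and $(B\mathbf 1)_j=0$ (all integrand differences vanish on the constant sequence), one has $(\tilde R\mathbf 1)_j=\hat N(x_j)$, so
\[ \frac{dV_j}{dt}-(\tilde R\mathbf V)_j \;=\; \Bigl[\frac{dU_j}{dt}-(\tilde R\mathbf U)_j\Bigr]-\epsilon+\epsilon t\,\hat N(x_j) \;\le\; -\epsilon \;<\; 0, \]
using the hypothesis of (i) together with $\hat N\le 0$ and $t\ge 0$.

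Next, I would suppose for contradiction that $V$ attains its maximum over $\mathbb Z\times[0,T]$ at some $(j^*,t^*)\in I_{h,T}$ with value strictly larger than $\max_{\partial I_{h,T}}V$, and I would choose $t^*$ minimal. Since $U_j\equiv 0$ for $|j|\ge J$ forces $V_j(0)=0$ at those nodes, one always has $\max_{\partial I_{h,T}}V\ge 0$, and therefore $V_{j^*}(t^*)>0$. Then I would verify $(\tilde R\mathbf V)_{j^*}(t^*)\le 0$ term by term. The diffusion piece is $\le 0$ since $C_h>0$ (indeed $\zeta(\a-1)<0$ for every $\a\in(0,2)$) and $V_{j^*\pm 1}(t^*)\le V_{j^*}(t^*)$. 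The upwind piece $\hat M(x_{j^*})\delta_u V_{j^*}$ is $\le 0$ by a case-by-case sign argument using $V_{j^*}-V_{j^*-1}\ge 0$ and $V_{j^*+1}-V_{j^*}\le 0$ at a spatial maximum. The zero-order piece $\hat N(x_{j^*})V_{j^*}(t^*)$ is $\le 0$ because $\hat N\le 0$ and $V_{j^*}(t^*)\ge 0$. The first nonlocal sum in $B\mathbf V$ is $\le 0$ because $|\s(x_{j^*})|^\a C_{1,\beta}(x_{j^*})\ge 0$ (both $C_p(\beta)$ and $C_n(\beta)$ are non-negative for $|\beta|<1$) and every $V_k(t^*)-V_{j^*}(t^*)\le 0$. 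For the second nonlocal sum the hypothesis $C_{2,\beta}(x_{j^*})\ge 0$ is decisive: at the maximum $(V_{j^*}-V_{j^*-1})/h\ge 0$ and $x_k-x_{j^*}>0$ for $k>j^*$, so every integrand $V_k-V_{j^*}-(x_k-x_{j^*})(V_{j^*}-V_{j^*-1})/h$ is non-positive, and the same bookkeeping covers the split form used when $|\s(x_{j^*})|<1-x_{j^*}$.

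The minimality of $t^*$ gives $dV_{j^*}/dt(t^*)\ge 0$, so combining with the term-wise bounds yields $dV_{j^*}/dt-(\tilde R\mathbf V)_{j^*}\ge 0$ at $(j^*,t^*)$, which directly contradicts the strict inequality $\le-\epsilon$ established above. Hence the maximum of $V$ lies on $\partial I_{h,T}$, and since $V_j(t)\le U_j(t)\le V_j(t)+\epsilon T$ the chain
\[ \max_{\mathbb Z\times[0,T]}U_j(t) \;\le\; \max_{\mathbb Z\times[0,T]}V_j(t)+\epsilon T \;\le\; \max_{\partial I_{h,T}}V_j(t)+\epsilon T \;\le\; \max_{\partial I_{h,T}}U_j(t)+\epsilon T \]
together with $\epsilon\to 0^+$ finishes (i). For (ii), I would apply (i) to $-\mathbf U$: linearity of $\tilde R$ turns the hypothesis of (ii) into $d(-U_j)/dt\le(\tilde R(-\mathbf U))_j$, and the absorbing boundary condition $-U_j=0$ for $|j|\ge J$ is preserved, so part (i) yields $\max(-U)=\max_\partial(-U)$, equivalently $\min U=\min_\partial U$.

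The main obstacle is the sign control of the second nonlocal sum, whose derivative-correction integrand mixes three indices and is not obviously monotone. The backward-difference form $(U_j-U_{j-1})/h$ of that correction, anchored precisely at the maximum index, is what guarantees that $(V_{j^*}-V_{j^*-1})/h\ge 0$ at a spatial maximum forces every integrand to be non-positive once $C_{2,\beta}\ge 0$ is assumed; this is the structural reason the $\beta$-dependent asymmetric drift contribution $\beta C_\alpha\sigma'$ must be absorbed into $\hat N$ rather than left inside the singular integral.
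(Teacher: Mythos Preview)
Your argument is correct and follows the same core idea as the paper's proof: verify term-by-term that $(\tilde R\mathbf V)_{j^*}\le 0$ at an interior maximum using $C_h>0$, the upwind sign structure, $\hat N\le 0$, $C_{1,\beta}\ge 0$, and the hypothesis $C_{2,\beta}\ge 0$ combined with the backward-difference correction. The paper only records these sign checks and defers the remaining steps (your $\epsilon$-perturbation to force a strict sub-inequality and the limit $\epsilon\to 0$) to a reference, so your write-up is in fact more complete than the paper's own proof.
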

\begin{proof}
See Appendix A.3.
\end{proof}

\begin{remark}
The theoretical analysis of the weak and strong maximum principles for nonlocal Waldenfels
operator has been studied \cite{qiao16}. The stability and convergence of the schemes follows from the maximum principle and the
 Lax equivalence theorem due to the linearity of the equations (\ref{density3}) and (\ref{density5}).
\end{remark}

\subsection{Numerical experiment}
Here we present an example to illustrate this numerical scheme for asymmetric FPE.

\begin{example}
Consider the Langevin equation is of the form
\begin{equation}
\label{example2}
dX_t=f(X_t)dt+g(X_t)d B_t+\sigma(X_t)dL^{\alpha,\beta}_{t}, ~~~~X_0=0,
\end{equation}
where the function $f$ is determined by a function $V$, so that $ f=-\partial V(x,t)/\partial x$. Here we consider a motion in the time-independent parabolic potential
\begin{equation}
V(x)=0.1x^2.
\end{equation}
The first noise term  is a Gaussian noise with intensity $g(x)$. The second noise term is an asymmetric $\alpha$-stable noise with intensity $\sigma(x)=\pi+\arctan x$. Perhaps our method may be work for some problems from the book \cite{wt}, but we only show a simple example here.
\end{example}

\befig[h]
\begin{center}
\includegraphics*[width=0.7\linewidth]{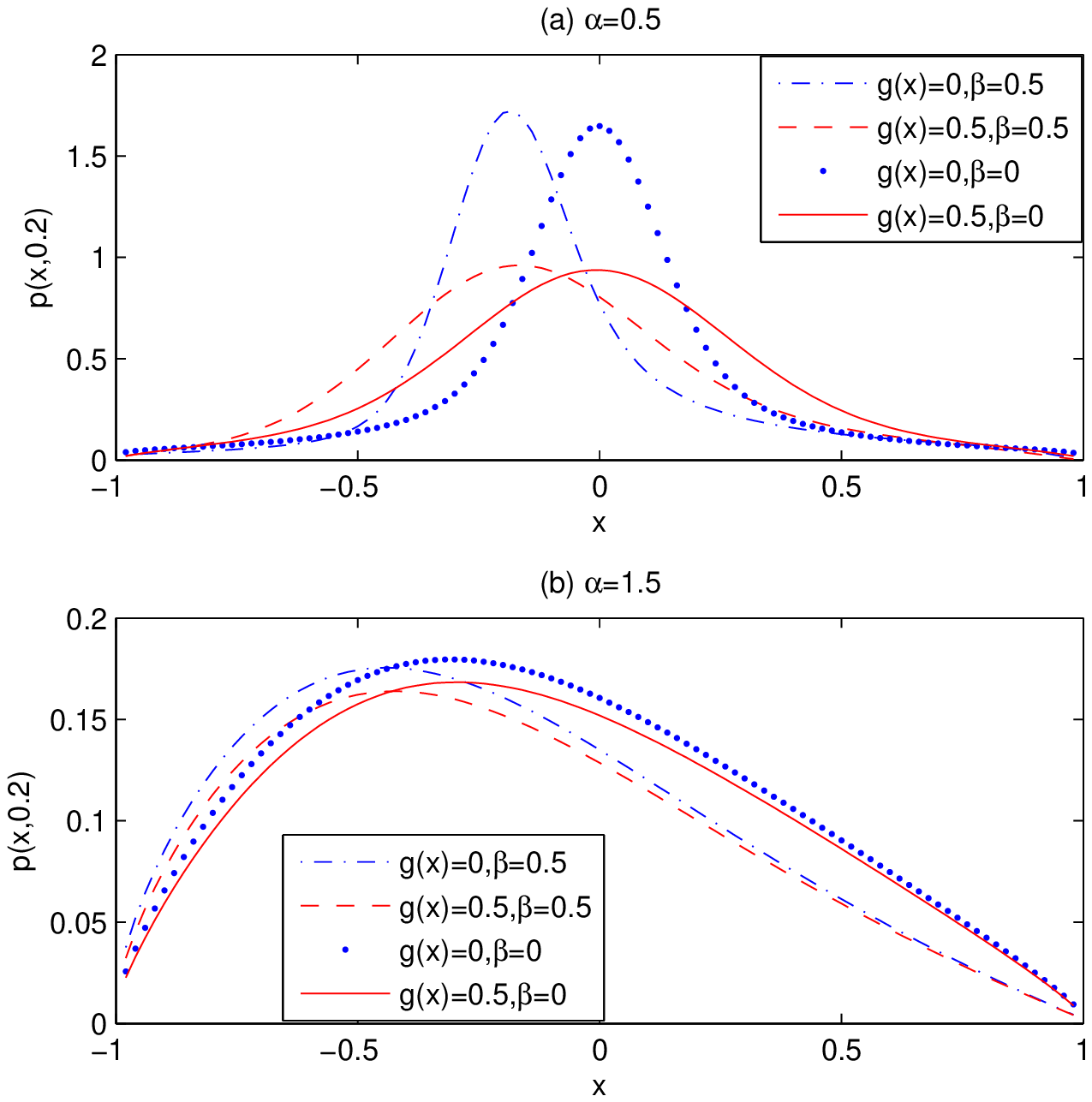}
\end{center}
\caption{The FPE driven by asymmetric multiplicative $\alpha$-stable L\'evy noises with $\sigma(x)=\pi+\arctan x, f(x)=-0.2x$ and $D=(-1,1)$  at time $t=0.2$ for different $\beta=0.5, 0$ and $g(x)=0, 0.5$.
 The initial condition is $p(x,0.01) = \sqrt{\frac{40}{\pi}}e^{-40x^2}$. (a) $\alpha = 0.5$; (b) $\alpha = 1.5$.}
\label{multiFPEAsy1}
\enfig

Here we use the backward Euler scheme for the time integration, which satisfies the maximum principle. We take the initial condition $p(x,0.01) = \sqrt{\frac{40}{\pi}}e^{-40x^2}$ and $D=(-1,1)$ for different skewness parameter $\beta$ and Gaussian noise intensity $g(x)$. Fig.~\ref{multiFPEAsy1} shows the solution of FPE with absorbing condition at time $t=0.2$. Moreover,  we find that the effects of Gaussian noise for the solution of FPE in the case of asymmetric $\alpha$-stable noise are similar to that of symmetric $\alpha$-stable noise. Besides, the skewness parameter $\beta$ has greater effects for the solution as $\a$ is smaller.

\section{Application to nonlinear filtering problem}
\subsection{Analysis of nonlinear filtering problem}
In this subsection, we derive the strong form of Zakai equation under  multiplicative $\alpha$-stable noise. Consider the following signal-observation systems on $\mathbb{R}^2$
\begin{equation}
\label{so}
\left\{
\begin{aligned}
dX_t&=f(X_t)dt+\sigma({X_t})dL_t^{{\alpha}}, \\
dY_t&=f_2(t, X_t)dt+\int_{\mathbb{R}}\gamma(t,X_{t-},z)\widetilde{N}(dt,dz),
\end{aligned}
\right.
\end{equation}
where $f$ and $f_2$ are given deterministic vector fields, and $\sigma$ is called the noise intensity. The one-dimensional L\'evy process $L^{{\alpha}}$ has generating triplet $(0,0,{\nu}_{\alpha})$.  The predictable compensator of the Poisson random measure $N$ is given by $dt\otimes \nu(du)$, where $\nu$ is a L\'evy measure. Moreover, $\widetilde{N}\left((0,t],du\right)=N((0,t],du)-t\nu(du)$.

We need to add two new assumptions  on the drift and  diffusion coefficients.
\par
{\bf Hypothesis  H5. }
The nonlinear term $f_2$ is bounded and Lipschitz, i.e., there exists a positive constant $C_1$ such that for any $t\in[0,T]$ and all $x,y\in\R$,
 \begin{equation*}
 \sup_{x\in \mathbb{R}}|f_2(t,x)|\leq C_1,
 \end{equation*}
 and
 \begin{equation*}
 |f_2(t,x)-f_2(t,y)|\leq C_1|x-y|.
 \end{equation*}
\par
{\bf Hypothesis  H6. }
There exists a positive constant $C_2$ such that for all $x,y\in\R$,
\begin{equation*}
\int_{0}^{T}\int_{\mathbb{R}}|\gamma(t, x, z)|^{2}\nu(dz)dt\leq C_2(1+x^2),
\end{equation*}
and
\begin{equation*}
\int_{0}^{T}\int_{\mathbb{R}}|\gamma(t, x, z)-\gamma(t, y, z)|^2\nu(dz)dt\leq C_2|x-y|^2.
\end{equation*}
\begin{remark}
Under Hypotheses $\bf{H1}$-$\bf{H3}$, $\bf{H5}$ and $\bf{H6}$, the signal-observation system \eqref{so} exists a unique solution.
\end{remark}
Let
\begin{equation*}
\mathcal{Y}_t=\mathcal{R}(Y_s:0\leq s\leq t)\vee \mathcal{N},
\end{equation*}
where $\mathcal{N}$ is the  collection of all $\mathbb{P}$ -negligible sets of $(\Omega,\mathcal{F})$.

The filtering problem aims at determining the conditional distribution of the signal $X_t$ at time $t$, given the information accumulated by observing $Y_t$ in the time interval $[0,t]$. What we are interested in is deriving the strong form of Zakai equation under multiplicative $\alpha$-stable noise. Before that, we present the change of probability measure for It\^o-L\'evy process, which comes from \cite[Theorem 1.31]{os}.
\begin{theorem}
Let $M$ be an one-dimensional It\^o-L\'evy processes of the form
\begin{equation*}
dM(t)=\alpha(t,\omega)dt+\sigma(t,\omega)dB(t)+\int_{\mathbb{R}}\gamma(t,z,\omega)\widetilde{N}(dt,dz), ~~0\leq t\leq T.
\end{equation*}
Assume there exist a predictable $\mathbb{R}$-valued process $u=\{u(t),t\in[0,T]\}$ and a family of predictable $\mathbb{R}$-valued processes $\theta(\cdot,z)=\{\theta(t,z),t\in[0,T]\}, z\in \mathbb{R}\backslash \{ 0\}$, such that
\begin{equation}
\label{op}
\sigma(t)u(t)+\int_{\mathbb{R}} \gamma(t,z)\theta(t,z)\nu(dz)=\alpha(t), ~for~ almost ~all ~(t, \omega) \in [0,T]\times \Omega,
\end{equation}
and the process
\begin{equation*}
\begin{aligned}
Z_{1}(t)&:=\exp\left\{-\int^{t}_{0}u(s)dB(s)-\frac{1}{2}\int^{t}_{0}u^{2}(s)ds+\int^{t}_{0}\int_{\mathbb{R}}\ln(1-\theta(s,z))\widetilde{N}(ds,dz)
\right.\\ &\phantom{=\;\;}\left.
+\int^{t}_{0}\int_{\mathbb{R}}\left[\ln(1-\theta(s,z))+\theta(s,z)\right]\nu(dz)ds\right\}, ~~0\leq t \leq T
\end{aligned}
\end{equation*}
is well defined and satisfies
\begin{equation*}
E[Z_1(T)]=1.
\end{equation*}
Define the probability measure $Q$ on $\mathcal{F}_{T}$ by $dQ=Z_{1}(T)dP$, then $M$ is a local martingale with respect to $Q$.
\end{theorem}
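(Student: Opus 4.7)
The plan is to recognize this statement as a Girsanov-type change-of-measure theorem for Itô--Lévy processes: the density $Z_1(T)$ is exactly the Doléans--Dade stochastic exponential associated with the pair $(u,\theta)$, so the transformation rules for $B$ and $\widetilde{N}$ under $Q$ can be read off, and condition \eqref{op} will cancel the drift of $M$. The forward strategy has three ingredients: (i) show $Z_1$ is a positive local martingale admissible as a Radon--Nikodym density; (ii) identify the $Q$-compensators of $B$ and $N$; (iii) substitute into $dM(t)$ and exploit \eqref{op}.

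First I would verify that $Z_1$ is the solution of the SDE
\begin{equation*}
dZ_1(t) = Z_1(t^-)\left[-u(t)\,dB(t) - \int_{\mathbb{R}}\theta(t,z)\,\widetilde{N}(dt,dz)\right],\qquad Z_1(0)=1,
\end{equation*}
by applying the Itô formula for jump processes to $\log Z_1(t)$ (here the hypothesis $\theta(t,z)<1$ is implicit so that $\log(1-\theta)$ is defined, and the compensating term $\int[\log(1-\theta)+\theta]\nu(dz)ds$ absorbs the difference between the compensated and uncompensated jump exponential). This representation makes $Z_1$ a nonnegative local martingale; together with the assumption $\mathbb{E}[Z_1(T)]=1$, it is a true martingale, so $dQ = Z_1(T)dP$ defines a probability measure equivalent to $P$ on $\mathcal{F}_T$.

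Next, I would invoke the two classical Girsanov statements whose joint content is packaged in the theorem. The drift transformation for the Brownian part gives that
\begin{equation*}
B^Q(t) := B(t) + \int_0^t u(s)\,ds
\end{equation*}
is a $Q$-Brownian motion, while the jump-measure transformation gives that the $Q$-predictable compensator of $N$ is $(1-\theta(t,z))\nu(dz)dt$, i.e.
\begin{equation*}
\widetilde{N}^Q(dt,dz) := \widetilde{N}(dt,dz) + \theta(t,z)\nu(dz)\,dt
\end{equation*}
is the compensated Poisson random measure under $Q$. Both facts follow from computing $[Z_1,B]$ and $[Z_1,\widetilde{N}(\cdot,A)]$ and applying the product rule, which is the step where rigor requires the most bookkeeping.

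Finally, I would substitute these into the SDE for $M$:
\begin{equation*}
dM(t) = \Bigl[\alpha(t)-\sigma(t)u(t)-\int_{\mathbb{R}}\gamma(t,z)\theta(t,z)\nu(dz)\Bigr]dt + \sigma(t)\,dB^Q(t) + \int_{\mathbb{R}}\gamma(t,z)\,\widetilde{N}^Q(dt,dz).
\end{equation*}
By hypothesis \eqref{op}, the bracketed drift vanishes a.s.\ in $(t,\omega)$, leaving $M$ as the sum of a $Q$-Brownian stochastic integral and a $Q$-compensated jump integral, hence a local martingale under $Q$. The main obstacle is verifying the integrability conditions on $u$ and $\theta$ that legitimize the Itô calculus above and make $Z_1$ a true (not merely local) martingale; once $\mathbb{E}[Z_1(T)]=1$ is assumed, as in the statement, this obstacle is bypassed and the rest of the argument is the standard Girsanov computation.
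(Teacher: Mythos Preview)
The paper does not supply a proof of this theorem at all; it is quoted verbatim from \O ksendal--Sulem (reference \cite{os}, Theorem~1.31), so there is nothing in the paper to compare against. Your sketch is correct and is precisely the standard argument one finds in that reference: recognize $Z_1$ as the Dol\'eans--Dade exponential of $-u\,dB-\int\theta\,d\widetilde N$, read off the $Q$-compensators of $B$ and $N$, and use \eqref{op} to kill the drift of $M$.

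One small point of contact with the paper worth noting: in the Corollary and Remark immediately following the theorem, the authors introduce a measure $\widetilde N_Q(dt,dz)=\frac{\theta}{1-\theta}\nu(dz)dt+\frac{1}{1-\theta}\widetilde N(dt,dz)$ and explicitly warn that this is \emph{not} the $Q$-compensated random measure of $N$; the genuine $Q$-compensated measure is $\widetilde N^{c}_Q(dt,dz)=\theta(t,z)\nu(dz)dt+\widetilde N(dt,dz)$, which coincides with the $\widetilde N^{Q}$ you use. So your identification is the right one for concluding that $\int\gamma\,d\widetilde N^{Q}$ is a $Q$-local martingale, and there is no conflict with the paper.
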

\begin{remark}
In Theorem 3, we require that for any $z\in \mathbb{R}\backslash \{ 0\}$, $\theta(\cdot,z)$ is a predictable process such that $\theta(t,z)<1$,
\begin{equation*}
\int_{0}^T \int_{\mathbb{R}} \left\{|\ln\left(1-\theta(t,z)\right)|^2+\theta^2(t,z)\right\}\nu(dz)dt<\infty,
\end{equation*}
and $u$ is a predictable process such that
\begin{equation*}
\int_{0}^Tu^{2}(t)<\infty.
\end{equation*}

\end{remark}
\begin{corollary}
Define the process $B_{Q}(t)$ and the random measure $\widetilde{N}_{Q}$ by
\begin{equation*}
dB_{Q}(t)=u(t)dt +dB(t),
\end{equation*}
and
\begin{equation*}
\widetilde{N}_{Q}(dt,dz)=\frac{\theta(t,z)}{1-\theta(t,z)}\nu(dz)dt+\frac{1}{1-\theta(t,z)}\widetilde{N}(dt,dz),
\end{equation*}
then under the new probability measure $Q$, $B_{Q}$ is a Brownian motion and $\widetilde{N}_{Q}$ is a martingale-valued measure.
\end{corollary}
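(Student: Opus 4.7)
The plan is to invoke the standard change-of-measure principle: a càdlàg process $M$ is a $Q$-local martingale if and only if $Z_1 M$ is a $P$-local martingale. Accordingly, I first rewrite $Z_1$ as a Doléans-Dade stochastic exponential so that I have a clean SDE for $dZ_1$, then apply Itô's product rule to $Z_1 B_Q$ and, separately, to $Z_1\widetilde N_Q(\cdot, A)$ for each Borel set $A$ bounded away from $0$ with $\nu(A)<\infty$. Using the identity $\int\ln(1-\theta)\widetilde N(ds,dz)+\int[\ln(1-\theta)+\theta]\nu(dz)ds = \int\ln(1-\theta)N(ds,dz)+\int\theta\,\nu(dz)ds$ and matching the exponent to the Doléans-Dade formula, one recognizes $Z_1=\mathcal E(X)$ with $X_t=-\int_0^t u(s)dB(s)-\int_0^t\int_{\mathbb R}\theta(s,z)\widetilde N(ds,dz)$, so that
\[
dZ_1(t)=-Z_1(t^-)u(t)dB(t)-Z_1(t^-)\int_{\mathbb R}\theta(t,z)\widetilde N(dt,dz).
\]
The hypothesis $\theta<1$ together with $\mathbb E[Z_1(T)]=1$ (inherited from Theorem~3) ensures $Z_1$ is a positive $P$-martingale.

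For the Brownian assertion, expand $d(Z_1 B_Q)=Z_1(t^-)dB_Q(t)+B_Q(t^-)dZ_1(t)+d[Z_1,B_Q]_t$. Since $B_Q$ is continuous, only the continuous martingale part of $Z_1$, namely $-\int Z_1^- u\,dB$, contributes to the covariation, giving $d[Z_1,B_Q]_t=-Z_1(t^-)u(t)dt$, which exactly annihilates the drift $Z_1^- u\,dt$ produced by $dB_Q = u\,dt+dB$. What remains is a combination of $dB$ and $\widetilde N$ stochastic integrals, hence $Z_1 B_Q$ is a $P$-local martingale. Thus $B_Q$ is a continuous $Q$-local martingale with $[B_Q,B_Q]_t=[B,B]_t=t$, and Lévy's characterization concludes that $B_Q$ is a $Q$-Brownian motion.

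For the jump piece, fix a Borel $A$ with $0\notin\overline A$ and $\nu(A)<\infty$, and set $M_t:=\widetilde N_Q(t,A)$. Expanding $d(Z_1 M)$ by the product rule, the continuous parts decouple, so the only nontrivial covariation comes from common jumps: at each jump $(s,z)$ of $N$ with $z\in A$, $\Delta Z_1(s)=-Z_1(s^-)\theta(s,z)$ while $\Delta M(s)=1/(1-\theta(s,z))$, yielding $d[Z_1,M]_t=-Z_1(t^-)\int_A \theta(t,z)/(1-\theta(t,z))\,N(dt,dz)$. Decomposing $N=\widetilde N+\nu(dz)dt$, the $\nu(dz)dt$ contributions from $Z_1^- dM$ and from $d[Z_1,M]$ cancel exactly, while the remaining $\widetilde N$ integrand collapses via the algebraic identity $1/(1-\theta)-\theta/(1-\theta)=1$. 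The residue is a $P$-local martingale, so $M=\widetilde N_Q(\cdot,A)$ is a $Q$-local martingale; extending to the full class of test sets in the standard way gives the martingale-valued-measure property.

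The main technical obstacle is the bookkeeping in Step~3: one must carefully distinguish the quadratic covariation $[Z_1,M]$ (an integral against $N$) from integrals against $\widetilde N$, and then recombine the $\nu\,dt$-drift contributions to verify their cancellation. This is precisely where the specific algebraic form of the ratios $\theta/(1-\theta)$ and $1/(1-\theta)$ in the definition of $\widetilde N_Q$ becomes indispensable—only this choice produces the critical identity $1/(1-\theta)-\theta/(1-\theta)=1$, which eliminates every non-martingale term and confirms that $\widetilde N_Q$ is the correct $Q$-compensated jump measure.
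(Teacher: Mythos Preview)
Your argument is correct and self-contained, whereas the paper's ``proof'' is a one-line citation to \cite[Theorem~1.35]{os}. So the routes differ entirely in scope: the paper outsources the result to {\O}ksendal--Sulem, while you actually reprove the relevant Girsanov-type statement by identifying $Z_1$ as the Dol\'eans--Dade exponential $\mathcal E(X)$ with $X_t=-\int_0^t u\,dB-\int_0^t\int\theta\,\widetilde N$, applying the product rule to $Z_1 B_Q$ and $Z_1\widetilde N_Q(\cdot,A)$, and invoking L\'evy's characterization for the Brownian part. Your covariation bookkeeping in the jump step is right: the drift from $Z_1^-\,dM$ cancels against the compensated part of $[Z_1,M]$, and the residual $\widetilde N$-integrand collapses via $1/(1-\theta)-\theta/(1-\theta)=1$. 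The payoff of your approach is that it makes transparent exactly why the specific coefficients $\theta/(1-\theta)$ and $1/(1-\theta)$ are forced; the paper's approach buys brevity.

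One minor caution on wording: in your closing sentence you call $\widetilde N_Q$ ``the correct $Q$-compensated jump measure.'' The corollary only asserts that $\widetilde N_Q$ is a \emph{martingale-valued measure} under $Q$, and in fact the paper's Remark immediately following the corollary stresses that $\widetilde N_Q$ is \emph{not} the $Q$-compensated random measure associated to $N$ (the latter is $\widetilde N_Q^c(dt,dz)=\theta(t,z)\nu(dz)dt+\widetilde N(dt,dz)$). Your proof establishes exactly the martingale property claimed, so just avoid the stronger label.
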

\begin{proof}
The result comes directly from \cite[Theorem 1.35 ]{os}.
\end{proof}
\begin{remark}
 In fact, $\widetilde{N}_{Q}(dt,dz)$ is not the $Q$-compensated random measure associated to $N(dt,dz)$.  The compensated random measure $\widetilde{N}^{c}_{Q}(dt,dz)$ associated to $N(dt,dz)$  under the new probability measure $Q$ satisfies the following equation, i.e.,
 \begin{equation*}
 \widetilde{N}^{c}_{Q}(dt,dz)=\theta(t,z)\nu(dz)dt+\widetilde{N}(dt,dz).
 \end{equation*}

\end{remark}
In the following, we will present the property of Girsanov Theorem for It\^o-L\'evy processes.
\begin{lemma}
 Set
\begin{equation*}
\mathbb{H}^{-1}_t=\exp\left\{\int_{0}^{t}\int_{\mathbb{R}}\ln \left(1- \theta(s,z)\right)\widetilde{N}(ds,dz)+\int_{0}^{t}\int_{\mathbb{R}}\left[\ln\left(1-\theta(s,z)\right)+\theta(s,z)\right]\nu(dz)ds\right\},
\end{equation*}
then $\mathbb{H}_t$ satisfies the following  equation
\begin{equation*}
\mathbb{H}_t=1+\int_{0}^{t}\int_{\mathbb{R}}\mathbb{H}_{s-}\theta(s,z)\widetilde{N}_{Q}(ds,dz).
\end{equation*}
\begin{proof}
Set
\begin{equation*}
U(t)=-\int_{0}^{t}\int_{\mathbb{R}}\ln \left(1- \theta(s,z)\right)\widetilde{N}(ds,dz)-\int_{0}^{t}\int_{\mathbb{R}}\left[\ln\left(1-\theta(s,z)\right)+\theta(s,z)\right]\nu(dz)ds.
\end{equation*}
Applying It\^o formula to $e^{U(t)}$, we have
\begin{equation*}
\begin{aligned}
d\mathbb{H}_t:=de^{U(t)}&=\mathbb{H}_t\int_{\mathbb{R}}\left[\frac{1}{1-\theta(t,z)}-1\right]N(dt,dz)
-\mathbb{H}_t\int_{\mathbb{R}}\theta(t,z)\nu(dz)dt\\
&=\int_{\mathbb{R}}\mathbb{H}_{t-}\theta(t,z)\widetilde{N}_{Q}(dt,dz).
\end{aligned}
\end{equation*}
\end{proof}
\end{lemma}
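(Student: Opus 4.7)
The plan is to treat $\mathbb{H}_t = e^{U(t)}$ with $U(t):=-\ln\bigl(\mathbb{H}_t^{-1}\bigr)$ as a pure-jump semimartingale and apply It\^o's formula for jump semimartingales, then convert the resulting $N$-integral into one against $\widetilde{N}_Q$ using the preceding Corollary. First I would unfold $U(t)$ via $\widetilde{N}(ds,dz)=N(ds,dz)-\nu(dz)ds$; the two $\ln(1-\theta(s,z))$ contributions (one integrated against $\widetilde{N}$, one in the $\nu$-drift) collapse to leave
$$U(t)=-\int_0^t\!\int_\mathbb{R}\ln(1-\theta(s,z))\,N(ds,dz)\;-\;\int_0^t\!\int_\mathbb{R}\theta(s,z)\,\nu(dz)\,ds.$$
Thus $U$ has no Brownian component, its continuous finite-variation drift has differential $-\int_\mathbb{R}\theta(t,z)\nu(dz)\,dt$, and at each atom of $N$ at time $s$ (with mark $z$) it jumps by $\Delta U(s)=-\ln(1-\theta(s,z))$.

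Next, applying It\^o's formula to $e^{U(t)}$ via the pure-jump identity $de^{U}=e^{U_{-}}\,dU^{\mathrm c}+e^{U_{-}}\bigl(e^{\Delta U}-1\bigr)$, together with $e^{\Delta U}-1=(1-\theta)^{-1}-1=\theta/(1-\theta)$ at each jump, I would obtain
$$d\mathbb{H}_t=-\mathbb{H}_{t-}\!\int_\mathbb{R}\theta(t,z)\,\nu(dz)\,dt\;+\;\int_\mathbb{R}\mathbb{H}_{t-}\,\frac{\theta(t,z)}{1-\theta(t,z)}\,N(dt,dz).$$
To finish, I would invoke the Corollary's identity $\widetilde{N}_Q(dt,dz)=\tfrac{\theta}{1-\theta}\nu(dz)\,dt+\tfrac{1}{1-\theta}\widetilde{N}(dt,dz)$, from which $\tfrac{1}{1-\theta}N(dt,dz)=\widetilde{N}_Q(dt,dz)+\nu(dz)\,dt$, and so $\tfrac{\theta}{1-\theta}N(dt,dz)=\theta\,\widetilde{N}_Q(dt,dz)+\theta\,\nu(dz)\,dt$. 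Substituting, the two $\theta\,\nu\,dt$ drift terms cancel exactly, leaving $d\mathbb{H}_t=\int_\mathbb{R}\mathbb{H}_{t-}\theta(t,z)\widetilde{N}_Q(dt,dz)$; with $\mathbb{H}_0=1$ this is the claimed integral equation.

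The main obstacle is the bookkeeping across the three random measures $N$, $\widetilde{N}$, $\widetilde{N}_Q$: one must keep straight which measure each integral is written against and verify that the $\theta\,\nu\,dt$ drift pieces coming from (i) the definition of $U$, (ii) the jump correction in It\^o's formula, and (iii) the change of compensator under $Q$, assemble so as to leave exactly the $\widetilde{N}_Q$ integral. The integrability and predictability of $\theta$ needed to justify It\^o's formula and to make the stochastic integrals well-defined are precisely those stipulated in the Remark preceding the Corollary; in particular $\theta(t,z)<1$ is what allows division by $1-\theta$ throughout.
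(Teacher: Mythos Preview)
Your proposal is correct and follows essentially the same approach as the paper: both apply It\^o's formula to $e^{U(t)}$, obtain the intermediate expression $d\mathbb{H}_t=\mathbb{H}_{t-}\bigl[\tfrac{1}{1-\theta}-1\bigr]N(dt,dz)-\mathbb{H}_{t-}\theta\,\nu(dz)\,dt$, and then identify this with the $\widetilde{N}_Q$-integral via the Corollary. Your version simply makes the bookkeeping more explicit by first unfolding $\widetilde{N}$ in $U(t)$ and by spelling out the identity $\tfrac{\theta}{1-\theta}N=\theta\,\widetilde{N}_Q+\theta\,\nu\,dt$ that effects the final cancellation.
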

We introduce a new measure $Q$ via $dQ/{d\mathbb{P}}={1}/{\mathbb{H}_T}$ and define
\begin{equation*}
{{P}_t}(\varphi):=\mathbb{E}^{Q}\left[\varphi(X_t)\mathbb{H}_t|\mathcal{Y}_t\right].
\end{equation*}
Now, we are ready to obtain the Zakai equation for ${{P}_t}(\varphi)$.
\begin{theorem}
For $\varphi \in \mathcal{D}({A})$, ${P}_t(\varphi)$ satisfies the following stochastic evolution equation, i.e.,
\begin{equation}
d{P}_t(\varphi)={P}_t({A}\varphi)dt+\int_{\mathbb{R}}
{P}_{t-}\left(\varphi\theta(t,z)\right)\widetilde{N}_{Q}(dt,dz),
\end{equation}
where ${A}$ is the infinitesimal generator  of $X_t$, i.e.,
\begin{equation}
({A}\varphi)(x)=f(x)\varphi^{'}(x)+\int_{{\mathbb{R}}\backslash \{ 0\}}\left[\varphi\left(x+\sigma(x)y \right)-\varphi(x)-\sigma(x)y\varphi^{'}(x){I}_{\{|y|<1\}}(y)\right]\nu^{\alpha}(dy).
\end{equation}
\begin{proof}
Step 1: Applying It\^o formula to $X_t$ and using the mutual independence of $L^{\alpha}_{t}$ and $\widetilde{N}_{Q}(dt,du)$, we have
\begin{equation}
\label{xr}
d\left[\varphi(X_t)\mathbb{H}_t\right]=\varphi(X_{t-})d\mathbb{H}_t+\mathbb{H}_{t-}d\varphi(X_{t}).
\end{equation}
Step 2: Taking the conditional expectation on both sides of \eqref{xr}, we obtain
\begin{equation*}
\begin{aligned}
\mathbb{E}^{Q}\left[\varphi(X_t)\mathbb{H}_t|\mathcal{Y}_t\right]=&\ \mathbb{E}^{Q}\left[\varphi(X_0)|\mathcal{Y}_t\right]
+\mathbb{E}^{Q}\left[\int^{t}_{0}\varphi(X_s)d\mathbb{H}_s|\mathcal{Y}_t\right]
+\mathbb{E}^{Q}\left[\int^{t}_{0}\mathbb{H}_s d\varphi(X_s)|\mathcal{Y}_t\right]\\
=&\ {{P}_0}(\varphi)+\int^{t}_{0}\int_{\mathbb{R}}\mathbb{E}^{Q}\left[\mathbb{H}_s\varphi(X_s)\theta(s,z)|\mathcal{Y}_s\right]\widetilde{N}_{Q}(ds,dz)\\
&\ +\int^{t}_{0}\mathbb{E}^{Q}\left[\mathbb{H}_s{A}\varphi(X_s)|\mathcal{Y}_s\right]ds \\
=&\ {{P}_0}(\varphi)+\int^{t}_{0}\int_{\mathbb{R}}P_{s-}(\varphi\theta(s,z))\widetilde{N}_{Q}(ds,dz)+\int^{t}_{0}{P}_s({A}\varphi)ds.
\end{aligned}
\end{equation*}
Therefore,
\begin{equation}
\label{mar}
d{P}_t(\varphi)={P}_t({A}\varphi)dt+\int_{\mathbb{R}}
{P}_{t-}\left(\varphi\theta(t,z)\right)\widetilde{N}_{Q}(dt,dz).
\end{equation}
\end{proof}
\end{theorem}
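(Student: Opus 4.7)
The plan is to apply the It\^o product formula to $\varphi(X_t)\mathbb{H}_t$, integrate from $0$ to $t$, and then project onto the observation filtration $\mathcal{Y}_t$ by taking $\mathbb{E}^Q[\,\cdot\,\mid\mathcal{Y}_t]$; the definition $P_t(\varphi) = \mathbb{E}^Q[\varphi(X_t)\mathbb{H}_t\mid\mathcal{Y}_t]$ will then identify both sides of the claimed SDE. The structural fact driving the argument is that $X_t$ jumps only through the $\alpha$-stable process $L^\alpha$, while $\mathbb{H}_t$ jumps only through the compensated observation measure $\widetilde{N}$ (equivalently $\widetilde{N}_Q$), and these two noise sources are independent by construction. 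Consequently, the quadratic covariation $[\varphi(X),\mathbb{H}]_t$ vanishes identically and the product rule produces no cross-term.

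First I would apply It\^o's formula to $\varphi(X_t)$ in the form
$$d\varphi(X_t) = (A\varphi)(X_t)\,dt + dM^{(1)}_t,$$
where $A$ is the generator displayed in the theorem and $M^{(1)}$ is the local martingale built from the compensated $\alpha$-stable jump measure. Because the Girsanov change from $\mathbb{P}$ to $Q$ only reweights the observation jumps and $L^\alpha$ is independent of them, $M^{(1)}$ remains a local $Q$-martingale. From Lemma~3 one has $d\mathbb{H}_t = \int_{\mathbb{R}} \mathbb{H}_{t-}\theta(t,z)\,\widetilde{N}_Q(dt,dz)$. Combining these via the product rule and the vanishing covariation gives
$$d[\varphi(X_t)\mathbb{H}_t] = \mathbb{H}_{t-}(A\varphi)(X_t)\,dt + \mathbb{H}_{t-}\,dM^{(1)}_t + \int_{\mathbb{R}} \varphi(X_{t-})\mathbb{H}_{t-}\theta(t,z)\,\widetilde{N}_Q(dt,dz).$$
Applying $\mathbb{E}^Q[\,\cdot\,\mid\mathcal{Y}_t]$ to the integrated version, the $M^{(1)}$-term drops (once promoted from local to genuine martingale by integrability), while a Fubini-type exchange together with the tower property converts the Lebesgue integral into $\int_0^t P_s(A\varphi)\,ds$ and the jump integral into $\int_0^t\!\int_{\mathbb{R}} P_{s-}(\varphi\theta(s,z))\,\widetilde{N}_Q(ds,dz)$, which is exactly the claimed Zakai equation.

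The main technical obstacle is justifying the last interchange of conditional expectation with the $\widetilde{N}_Q$-stochastic integral; this is the filtering-theoretic analogue of an It\^o-isometry argument and demands joint predictability and $L^2$-integrability of the integrand $\varphi(X_{s-})\mathbb{H}_{s-}\theta(s,z)$ against the $Q$-compensator of $\widetilde{N}_Q$. Hypotheses \textbf{H5} and \textbf{H6} supply the required bounds: boundedness and Lipschitz continuity of $f_2$ yield $\mathbb{E}^Q[\mathbb{H}_T^{\,2}] < \infty$ via standard exponential-martingale estimates, while the $L^2$-condition on $\gamma$ together with $\varphi \in \mathcal{D}(A)$ (taken bounded with bounded derivatives) secures integrability of $\varphi\,\theta$. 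A secondary subtlety is that $\widetilde{N}_Q$ is, as the remark after Corollary~1 emphasizes, a martingale-valued random measure under $Q$ rather than the $Q$-compensated measure of $N$; however, since stochastic integrals against $\widetilde{N}_Q$ are still true $Q$-martingales, the standard projection identity applies once the integrability has been secured.
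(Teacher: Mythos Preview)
Your proposal is correct and follows essentially the same route as the paper: apply the It\^o product rule to $\varphi(X_t)\mathbb{H}_t$, use the independence of $L^\alpha$ and the observation noise to eliminate the covariation term, and then condition on $\mathcal{Y}_t$ to identify the three pieces as $P_0(\varphi)$, $\int_0^t P_s(A\varphi)\,ds$, and the $\widetilde{N}_Q$-integral. In fact, you supply considerably more detail than the paper does---the paper's proof merely records the product-rule identity and the conditioning step without discussing the martingale term $M^{(1)}$, the Fubini/tower argument, or the integrability issues you flag via \textbf{H5}--\textbf{H6}.
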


In the following, we aim at deriving the strong form of Zakai equation under  multiplicative $\alpha$-stable noise.  By Lemma 1, we know the adjoint operator ${A}$ is
\begin{equation*}
({{A}}^{*}\varphi)(x)=-\left(f(x)\varphi(x)\right)^{'}+\int_{{\mathbb{R}}\backslash \{ 0\}}\left[|\sigma(x+z)|^{\alpha}\varphi(x+z)-|\sigma(x)|^{\alpha}\varphi(x)\right]\nu^{\alpha}(dz).
\end{equation*}
Heuristically, if the unconditional distribution of the signal ${P}_t(\varphi)$ has a density $p(t,x)$ with respect to Lebesgue measure for all $t>0$, i.e.,
${P}_t(\varphi)=\int_{\mathbb{R}}\varphi(x)p(t,x)dx$, then we have the following result. Moreover, the unnormalized density $p(t,x)$ satisfies the following stochastic partial differential equation.
\begin{theorem}
Under Hypotheses $\bf{H1}$-$\bf{H3}$, $\bf{H5}$ and $\bf{H6}$, $p(t,x)$ satisfies the following stochastic partial differential equation, i.e.,
\begin{equation}
\label{dens1}
dp(t,x)={{A}}^{*}p(t,x)dt+\int_{\mathbb{R}}p(t, x)\theta(t,z)\widetilde{N}_{Q}(dt,dz),
\end{equation}
where
\begin{equation}
{{A}}^{*}p(t,x)=-\left(f(x)p(t,x)\right)^{'}+\int_{{\mathbb{R}}\backslash \{ 0\}}\left[|\sigma(x+z)|^{\alpha}p(t,x+z)-|\sigma(x)|^{\alpha}p(t,x)\right]\nu^{\alpha}(dz).
\end{equation}
\begin{proof}
By the definition of ${P}_t(\varphi)$ and \eqref{mar}, we have
\begin{equation*}
\begin{aligned}
{P}_t(\varphi)=&\ \int_{\mathbb{R}}\varphi(x)p(t,x)dx\\
=&\ \int_{\mathbb{R}}\varphi(x)p_0(x)dx+\int_{\mathbb{R}}\left\langle {A}\varphi(x),\int_{0}^{t}p(s,x)ds\right \rangle dx \\
&\ +\int_{\mathbb{R}}\varphi(x)\left[\int_{0}^{t}\int_{\mathbb{R}}p(s,x)\theta(s,z)\widetilde{N}_{Q}(ds,dz)\right]dx.
\end{aligned}
\end{equation*}
Thus, we get
\begin{equation*}
dp(t,x)={A}^{*}p(t,x)dt+\int_{\mathbb{R}}p(t, x)\theta(t,z)\widetilde{N}_{Q}(dt,dz),
\end{equation*}
where $A^{*}$ is the adjoint of the operator $A$.
\end{proof}
\end{theorem}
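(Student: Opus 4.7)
The plan is to pass from the weak evolution equation in Theorem~4 to the strong PDE form by testing against an arbitrary $\varphi\in\mathcal{D}(A)$ and identifying the integrand spatially. First, I would substitute the density representation $P_t(\varphi)=\int_{\R}\varphi(x)p(t,x)\,dx$ into the integrated form of \eqref{mar}, namely
\begin{equation*}
P_t(\varphi) = P_0(\varphi) + \int_0^t P_s(A\varphi)\,ds + \int_0^t\!\!\int_{\R} P_{s-}\!\left(\varphi\,\theta(s,z)\right)\widetilde{N}_Q(ds,dz).
\end{equation*}
The Lebesgue integral term then reads $\int_0^t\!\int_{\R}(A\varphi)(x)\,p(s,x)\,dx\,ds$, and I would use the adjoint identity from Lemma~1, i.e.\ $\int_{\R}(A\varphi)(x)p(s,x)\,dx=\int_{\R}\varphi(x)(A^{*}p(s,\cdot))(x)\,dx$, which is valid once $p(s,\cdot)$ lies in the domain of $A^{*}$ (guaranteed under Hypothesis~\textbf{H3} and the regularity of the filtering density implied by Hypotheses~\textbf{H5}--\textbf{H6}).

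Next, I would treat the stochastic integral term by a stochastic Fubini argument for the compensated Poisson random measure $\widetilde{N}_Q$. Because $\theta(s,z)$ does not depend on $x$, the inner conditional expectation factors as $P_{s-}(\varphi\,\theta(s,z))=\theta(s,z)\int_{\R}\varphi(x)p(s-,x)\,dx$, and the stochastic Fubini theorem (applicable under the square-integrability conditions in the remark following Theorem~3, together with Hypothesis~\textbf{H6}) permits the interchange
\begin{equation*}
\int_0^t\!\!\int_{\R} P_{s-}(\varphi\,\theta(s,z))\,\widetilde{N}_Q(ds,dz)
= \int_{\R}\varphi(x)\!\left[\int_0^t\!\!\int_{\R} p(s-,x)\,\theta(s,z)\,\widetilde{N}_Q(ds,dz)\right]dx.
\end{equation*}
Collecting everything, for every test function $\varphi$,
\begin{equation*}
\int_{\R}\varphi(x)\,p(t,x)\,dx = \int_{\R}\varphi(x)\left\{p_0(x) + \int_0^t (A^{*}p(s,\cdot))(x)\,ds + \int_0^t\!\!\int_{\R} p(s-,x)\theta(s,z)\widetilde{N}_Q(ds,dz)\right\}dx.
\end{equation*}
Since $\varphi\in\mathcal{D}(A)$ is dense in a suitable function space, a standard density/variational argument yields the pointwise (in $x$) equality of the bracketed expression with $p(t,x)$, which is exactly the integrated form of \eqref{dens1}; differentiation in $t$ then gives the claimed SPDE.

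The main obstacle I anticipate is rigorously justifying the stochastic Fubini step: one needs predictability of the integrand, integrability $\mathbb{E}^{Q}\!\int_0^t\!\int_{\R}|p(s,x)\theta(s,z)|^2\nu(dz)\,ds<\infty$ for $dx$-a.e.\ $x$, and enough joint measurability in $(s,z,x)$ to commute the spatial and stochastic integrals. A secondary technical point is verifying that $p(s,\cdot)$ really lies in $\mathcal{D}(A^{*})$ so that the adjoint identity of Lemma~1 applies without boundary contributions; under Hypothesis~\textbf{H3} and the smoothing effect of the generator $A^{*}$, this can be established via standard parabolic estimates, but it is where the heuristic ``if a density exists'' qualification in the statement is doing real work.
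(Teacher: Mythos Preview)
Your proposal is correct and follows essentially the same route as the paper: substitute the density representation into the integrated weak Zakai equation \eqref{mar}, pass $A$ to its adjoint $A^{*}$ via Lemma~1, move the spatial integral past the compensated Poisson integral, and conclude by varying $\varphi$. The paper's own proof is considerably more terse---it simply writes out the three terms and declares the result---so your explicit invocation of the adjoint identity, the stochastic Fubini interchange, and the density argument in fact supplies the justification that the paper leaves implicit.
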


\subsection{Approximation analysis for nonlinear filtering problem}
 The purpose here is to study one of the Trotter-like product formulas.  Firstly, we develop an infinite partition $0 =t_0<t_1<t_2<\cdots < t_{n}=T$, to be denoted by $\kappa$,  with time increments $\delta_{i}=t_{i}-t_{i-1}$.
\begin{definition}
An admissible sampling procedure relative to the partition $\kappa$ is a family $\{\mathcal{\bar{Y}}^{t_{i+1}}_{t_i}, i\geq 0\}$ of $\sigma$-algebras which satisfy, for all $i\geq 0$
\begin{enumerate}
\item[(i)]$\mathcal{\bar{Y}}^{t_{i+1}}_{t_i}$ is generated by a finite number of random variables;
\item[(ii)]$\mathcal{\bar{Y}}^{t_{i+1}}_{t_i}\subset \mathcal{{Y}}^{t_{i+1}}_{t_i}$.
\end{enumerate}
In addition, we introduce the following notations by
\begin{equation*}
\mathcal{\bar{Y}}^{t_{n}}_{t_m}=:\bigvee^{n-1}_{i=m}\mathcal{\bar{Y}}^{t_{i+1}}_{t_i}, ~~~\mathcal{\bar{Y}}^{t_m}=\mathcal{\bar{Y}}^{{t_m}}_{0}.
\end{equation*}
\end{definition}
By the Kallianpur-Striebel formula, we have
\begin{equation}
\mathbb{E}\big(\phi(X_{t_{i}})|\mathcal{\bar{Y}}^{t_i}\big)
=\frac{\mathbb{E}^{Q}\left[\varphi(X_{t_{i}})\mathbb{H}_{t_{i}}|\mathcal{\bar{Y}}^{t_i}\right]}{\mathbb{E}^{Q}\left[\mathbb{H}_{t_{i}}|\mathcal{\bar{Y}}^{t_i}\right]}.
\end{equation}
The following proposition is from \cite[Proposition 3.2]{fra}.
\begin{prop}
Set
\begin{equation}
\begin{aligned}
M^{t_{i+1}}_{t_{i}}&=:\mathbb{E}^{Q}\left(\mathbb{H}^{t_{i+1}}_{t_{i}}|\mathcal{F}_{t_{i+1}}\bigvee \mathcal{\bar{Y}}^{t_{i+1}}_{t_{i}}\right),\\
N_{i+1}\varphi&=:\mathbb{E}^{Q}\left(\varphi(X_{t_{i+1}})\mathbb{H}^{t_{i+1}}_{t_{i}}|\mathcal{F}_{t_{i}}\bigvee \mathcal{\bar{Y}}^{t_{i+1}}_{t_{i}}\right), \\
(p_{i},\varphi)&=:\mathbb{E}^{Q}\left[\varphi(X_{t_{i}})\mathbb{H}_{t_{i}}|\mathcal{\bar{Y}}^{t_i}\right],
\end{aligned}
\end{equation}
then we have
\begin{equation}
(p_{i+1},\varphi)=\mathbb{E}^{Q}\left[(N_{i+1}\varphi)\mathbb{H}_{t_{i}}|\mathcal{\bar{Y}}^{t_{i+1}}\right].
\end{equation}
\end{prop}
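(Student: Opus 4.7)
The plan is to exploit the multiplicative structure of the Radon--Nikodym density $\mathbb{H}_t$ together with repeated conditioning on nested $\sigma$-algebras. First, I would write $\mathbb{H}_{t_{i+1}} = \mathbb{H}_{t_i}\,\mathbb{H}^{t_{i+1}}_{t_i}$, where $\mathbb{H}^{t_{i+1}}_{t_i}$ denotes the natural multiplicative increment of the density from $t_i$ to $t_{i+1}$; this factorisation is immediate from the stochastic exponential representation established in Lemma~3 (and is the standard multiplicativity property of Dol\'eans exponentials). By the definition of $(p_{i+1},\varphi)$ this gives
\[
(p_{i+1},\varphi) = \mathbb{E}^{Q}\bigl[\varphi(X_{t_{i+1}})\,\mathbb{H}_{t_i}\,\mathbb{H}^{t_{i+1}}_{t_i}\,\big|\,\mathcal{\bar{Y}}^{t_{i+1}}\bigr].
\]

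Next I would apply the tower property using the refining $\sigma$-algebra $\mathcal{G}:=\mathcal{F}_{t_i}\vee\mathcal{\bar{Y}}^{t_{i+1}}\supset\mathcal{\bar{Y}}^{t_{i+1}}$, and pull $\mathbb{H}_{t_i}$ (which is $\mathcal{F}_{t_i}$-measurable, hence $\mathcal{G}$-measurable) out of the inner conditional expectation. This yields
\[
(p_{i+1},\varphi)=\mathbb{E}^{Q}\!\left[\mathbb{H}_{t_i}\,\mathbb{E}^{Q}\bigl[\varphi(X_{t_{i+1}})\,\mathbb{H}^{t_{i+1}}_{t_i}\,\big|\,\mathcal{G}\bigr]\,\Big|\,\mathcal{\bar{Y}}^{t_{i+1}}\right].
\]
To identify the inner conditional expectation with $N_{i+1}\varphi$, I need the filtration identity $\mathcal{F}_{t_i}\vee\mathcal{\bar{Y}}^{t_{i+1}}=\mathcal{F}_{t_i}\vee\mathcal{\bar{Y}}^{t_{i+1}}_{t_i}$. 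Since $\mathcal{\bar{Y}}^{t_{i+1}}=\mathcal{\bar{Y}}^{t_i}\vee\mathcal{\bar{Y}}^{t_{i+1}}_{t_i}$, this reduces to verifying $\mathcal{\bar{Y}}^{t_i}\subset\mathcal{F}_{t_i}$, which follows from condition (ii) of the admissible sampling procedure: $\mathcal{\bar{Y}}^{t_{k+1}}_{t_k}\subset\mathcal{Y}^{t_{k+1}}_{t_k}\subset\mathcal{Y}_{t_{k+1}}\subset\mathcal{F}_{t_{k+1}}$ for each $k\le i-1$, and these are then joined inside $\mathcal{F}_{t_i}$.

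With this identification the inner conditional expectation equals $N_{i+1}\varphi$, and substituting back produces the claimed recursion $(p_{i+1},\varphi)=\mathbb{E}^{Q}[(N_{i+1}\varphi)\mathbb{H}_{t_i}\,|\,\mathcal{\bar{Y}}^{t_{i+1}}]$. The main obstacle, as I see it, is not any single algebraic manipulation but the careful bookkeeping of the four $\sigma$-algebras in play ($\mathcal{F}_{t_i}$, $\mathcal{F}_{t_{i+1}}$, $\mathcal{\bar{Y}}^{t_{i+1}}_{t_i}$ and $\mathcal{\bar{Y}}^{t_{i+1}}$): one must check that the sampled observation $\sigma$-algebras are compatible with the signal--observation filtration $\mathcal{F}_t$, and that the increment $\mathbb{H}^{t_{i+1}}_{t_i}$ is a bona fide $\mathcal{F}_{t_{i+1}}$-measurable random variable for which all the intermediate conditionings are well defined. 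Once these measurability points are settled, the argument collapses to two clean applications of the tower and pull-out properties of conditional expectation.
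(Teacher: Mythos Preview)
Your argument is correct. Note, however, that the paper does not actually supply its own proof of this proposition: it simply quotes the result from \cite[Proposition 3.2]{fra} without reproducing the argument. Your outline---factor $\mathbb{H}_{t_{i+1}}=\mathbb{H}_{t_i}\mathbb{H}^{t_{i+1}}_{t_i}$ via the multiplicativity of the stochastic exponential, insert the intermediate $\sigma$-algebra $\mathcal{F}_{t_i}\vee\mathcal{\bar Y}^{t_{i+1}}$ by the tower property, pull out the $\mathcal{F}_{t_i}$-measurable factor $\mathbb{H}_{t_i}$, and use $\mathcal{\bar Y}^{t_i}\subset\mathcal{F}_{t_i}$ to collapse $\mathcal{F}_{t_i}\vee\mathcal{\bar Y}^{t_{i+1}}$ to $\mathcal{F}_{t_i}\vee\mathcal{\bar Y}^{t_{i+1}}_{t_i}$---is exactly the standard proof one finds in the cited reference, so there is nothing to compare and no gap to flag.
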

Now we assume $X_{t}=X_{t_{i}}$ for $t_{i}\leq t < t_{i+1}$, then we have
\begin{equation}
\begin{aligned}
M^{t_{i+1}}_{t_{i}}&=\exp\left\{-\int_{\mathbb{R}}\ln (1-\theta(t_i,z))N(\delta_i, dz)-\delta_{i}\int_{\mathbb{R}} \theta(t_i,z)\nu(dz)\right\}\\
&:=\chi_{i}(X_{t_{i}}).
\end{aligned}
\end{equation}

Therefore
\begin{equation}
N_{i+1}\varphi=\chi_{i}(X_{t_{i}})\mathbb{E}^{Q}\left(\varphi(X_{t_{i+1}})|\mathcal{F}_{t_{i}}\right).
\end{equation}
Assume the signal $\{X_t, t\geq 0\}$ is a diffusion process associated with the semi-group $\mathbf{P}_t$, then we have
\begin{equation}
\label{ni2}
N_{i+1}\varphi=\chi_{i}(X_{t_{i}})[{\mathbf{P}_{\delta_{i}}}\varphi](X_{t_{i}}),
\end{equation}
and
\begin{equation}
\begin{aligned}
(p_{i+1},\varphi)&=\mathbb{E}^{Q}\left(\chi_{i}(X_{t_{i}})[{\mathbf{P}_{\delta_{i}}}\varphi](X_{t_{i}})\mathbb{H}_{t_{i}}|\mathcal{\bar{Y}}^{t_{i+1}}\right)\\
&=\left(p_{i}, \chi_{i}[{\mathbf{P}_{\delta_{i}}}\varphi]\right).
\end{aligned}
\end{equation}
Therefore we have
\begin{equation}
p_{i+1}={P^{*}_{\delta}}[\chi_{i}p_{i}].
\end{equation}
Using an implicit Euler scheme, we get the approximation scheme of \eqref{dens1}, i.e.,
\begin{equation}
\label{dsi}
(I-\delta_{i}{A}^{*})p_{i+1}=\chi_{i}p_{i}.
\end{equation}
\begin{remark}
The convergence analysis of this approximation is similar to \cite[Theorem 5.2]{fra}.
\end{remark}
\subsection{An example }
 In this example, we consider the following gradient dynamical system on $\mathbb{R}$ as the signal system,  which represents the potential energy landscape for diffusion of molecules.
\begin{equation}
\label{example12}
\begin{aligned}
dX_t&=f(X_t)dt+\big(2+\sin({X_{t-}})\big)dL_t^{{\alpha}},
\end{aligned}
\end{equation}
where the function $f$ is determined by a energy potential, denoted by $V$ with $V(x,t)=-\frac{1}{2}x^2+\frac{1}{4}x^4$, so that $ f=-\partial V(x,t)/\partial x$. Here we assume the gradient dynamical system is driven by multiplicative  periodically modulated noise, which is widely used to explain the periodic recurrence of the earth's ice age on Earth and describe the evolution of physical phenomena in ideal fluctuating environments \cite{yu}.

In order to track this molecule, we use tunnel electron microscope to receive observation of the molecule \cite{nm}. Here we show a simple example. In this experiment, we assume that the observation system is given by
\begin{equation}
\label{example13}
dY_t=\frac{\cos(X_t)}{2\sqrt{2}}dt+\int_{\mathbb{R}}\cos (X_{t-})e^{-\frac{z^2}{2}} \widetilde{N}(dt,dz),
\end{equation}
where $\widetilde{N}(dt,dz)={N}(dt,dz)-\nu(dz)dt$ with $\nu(dz)=\frac{1}{\sqrt{2 \pi}}e^{-\frac{z^2}{2}}dz$.

Using Theorem 5, the strong form of Zakai equation for the signal-observation system \eqref{example12}-\eqref{example13} is
\begin{equation}
\label{exam3}
dp(t,x)={{A}}^{*}p(t,x)dt+\int_{\mathbb{R}}p(t, x)\theta(t,z)\widetilde{N}_{Q}(dt,dz),
\end{equation}
where
\begin{equation*}
\begin{aligned}
({{A}}^{*}\varphi)(x)=&\ -\left((x-x^3)\varphi(x)\right)^{'}\\
&\ +\int_{{\mathbb{R}}\backslash \{ 0\}}\left[(2+\sin(x+z))^{\alpha}\varphi(x+z)-(2+\sin(x))^{\alpha}\varphi(x)\right]\nu^{\alpha}(dz),
\end{aligned}
\end{equation*}
and
\begin{align*}
\theta(t,z)&=\frac{1}{2}, \\
\widetilde{N}_{Q}(dt,dz)&=2\widetilde{N}(dt,dz)+\nu(dz)dt.
\end{align*}
In Figure 5, we simulate the signal -observation processes with $X_0=0, Y_0=0, \alpha=0.5, \Delta t=5\times 10^{-4}, T=5$. Use an implicit Euler scheme, the solution of Zakai equaiton for Eq.~\eqref{exam3} with $\alpha=0.5, p(x,0) = \sqrt{\frac{40}{\pi}}e^{-40x^2} $ is illustrated in the Figure 6.
\befig[h]
\begin{center}
\includegraphics*[width=0.7\linewidth]{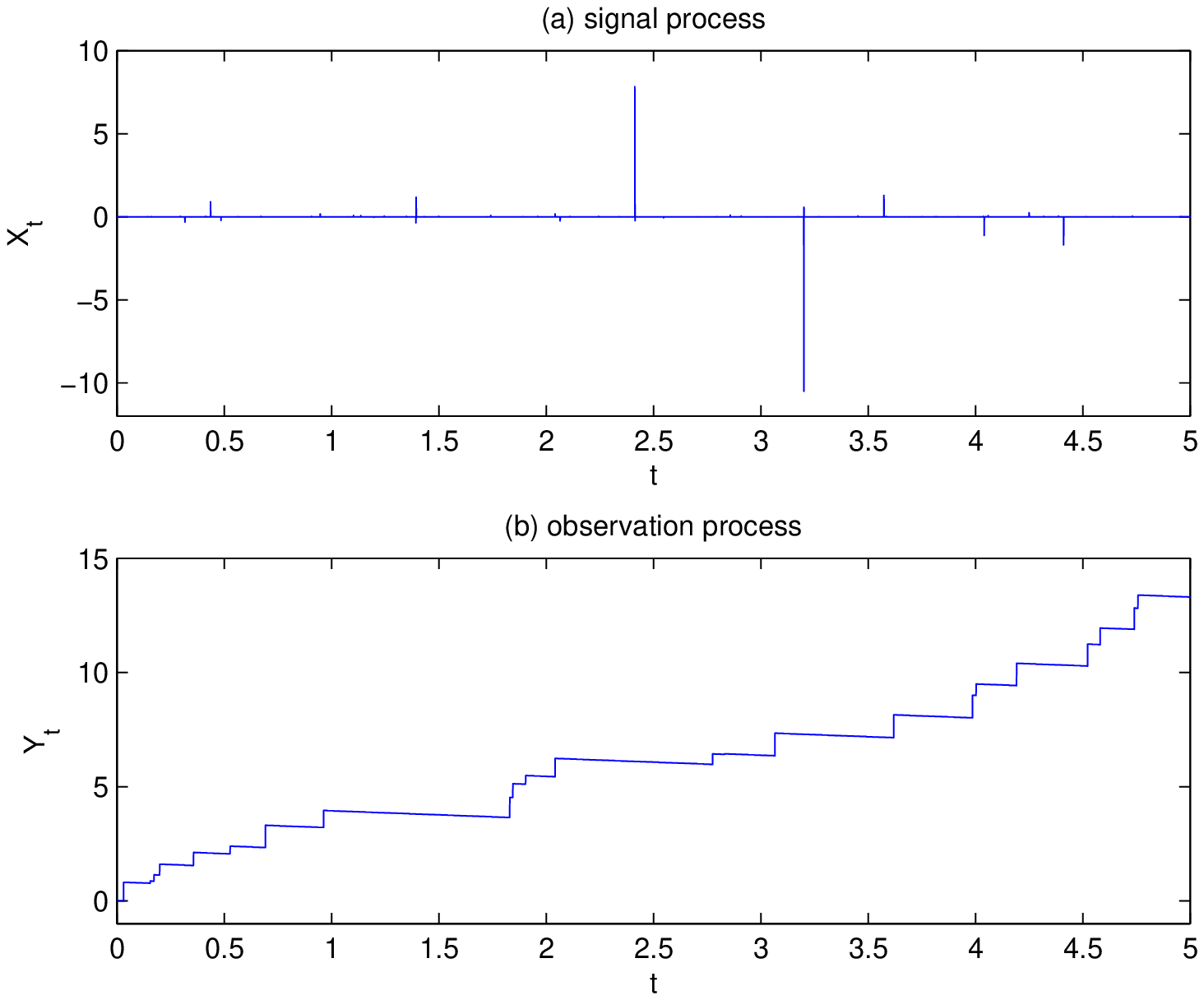}
\end{center}
\caption{The signal-observation systems for \eqref{example12} with $X_0=0, Y_0=0, \alpha=0.5, \Delta t=5\times 10^{-4}, T=5$. }
\label{Filteringalpha05}
\enfig

\befig[h]
\begin{center}
\includegraphics*[width=0.7\linewidth]{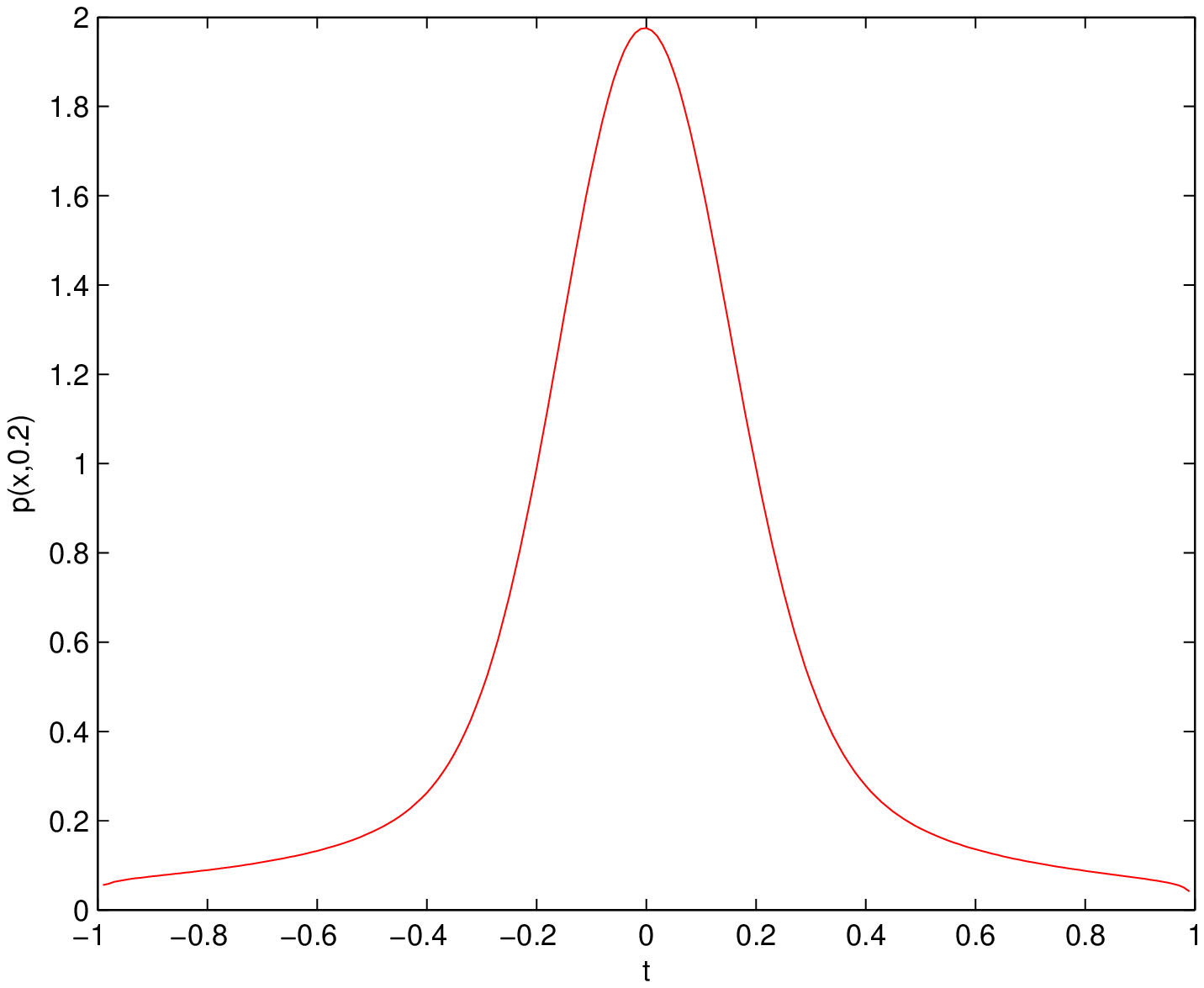}
\end{center}
\caption{The solution of Zakai equaiton for Eq.~\ref{exam3} with $\alpha=0.5, p(x,0) = \sqrt{\frac{40}{\pi}}e^{-40x^2}$. }
\label{filterfpe}
\enfig

\section{Concluding remarks}
\par
In this paper, we have derived the FPEs for stochastic systems with multiplicative $\alpha$-stable noises by the method of adjoint operators, and developed a new numerical scheme with stability and convergence analysis for one-dimensional case. We have also illustrated the scheme with some numerical experiments. Moreover, we have extended these results to the asymmetric case and then have successfully applied to the nonlinear filtering problem. Our results are expected to assist simulation study of, for example, the hydrodynamical systems.

In this paper, it is required that the noise intensity $\sigma$ is strict monotone for the asymmetric case.  The restriction can be relaxed .

Furthermore, we may extend the results to high-dimensional case under certain conditions. This could potentially find applications
in most probable trajectories of anomalous diffusion processes. It is also possible to discuss the connection between the forward and backward Kolmogorov equations. These topics are being studied and will be reported in future works.

\medskip
\textbf{Acknowledgements}.  We would like to thank Xu Sun (Huazhong University of Sciences and Technology, China) and Feng Bao (Florida State University, USA) for helpful discussions. The research of Y. Zhang was supported by the NSFC grant 11901202.  The research
of X. Wang was supported by the NSFC grant 11901159 and the cultivation project of first class subject of Henan University (No. 2019YLZDJL08). The research of J. Duan was supported  by the NSF-DMS no. 1620449 and NSFC grant. 11531006 and
 11771449. The research
of T. Li was supported by the NSFC grant 11801143.

\section{Appendix A. Further Proofs}
A.1. $\mathbf{Proof ~~of ~~Proposition  ~~3.1}$. Suppose $U_j^n$ is the numerical solution for $u$ at $(x_j, t_n)$. For $0<U_j^n \leq M$, where $M$ is the maximum value of the initial probability density. As $0< \alpha< 2$, we have $\zeta(\a-1)\leq 0$. Applying the explicit Euler for \eqref{U}, and taking $c(x)=c(1,\a)|\s(x)|^\a\geq0 $, we get
\begin{equation}
\label{uj}
\begin{aligned}
 U_j^{n+1} =&\ U_j^n-\delt  c(x_j) \zeta(\a-1)h^{-\a} (U_{j+1}^n-2U_j^n+U_{j-1}^n) \\
            &\ - \frac{c(x_j)\delt}{\a}  \bigg[\frac{1}{(1+x_j)^\a} +\frac{1}{(1-x_j)^\a} \bigg]U_j^n
            + c(x_j)\delt h \sum\limits_{k=-J-j, k\neq 0}^{J-j} \frac{U_{j+k}^n-U_j^n}{|x_k|^{1+\a}} \\
   =& \ \Bigg\{1+2\delt c(x_j)\zeta(\a-1) h^{-\a}- \frac{c(x_j)\delt}{\a}  \bigg[\frac{1}{(1+x_j)^\a} +\frac{1}{(1-x_j)^\a}\bigg] \\
            &\ - c(x_j)\delt h \sum\limits_{k=-J-j, k\neq 0}^{J-j} \frac{1}{|x_k|^{1+\a}} \Bigg\} U_j^n \\
   &\ -c(x_j)\delt\zeta(\a-1)h^{-\a} (U_{j-1}^n+U_{j+1}^{n})
            +  c(x_j)\delt h \sum\limits_{k=-J-j, k\neq 0}^{J-j}\frac{ U_{j+k}^n}{|x_k|^{1+\a}}.
\end{aligned}
\end{equation}
Denote
\begin{equation*}
\begin{aligned}
   \hat{ L}:&=-2\delt c(x_j)\zeta(\a-1) h^{-\a}+\frac{c(x_j)\delt}{\a}  \bigg[\frac{1}{(1+x_j)^\a} +\frac{1}{(1-x_j)^\a}\bigg]
            + c(x_j)\delt h \sum\limits_{k=-J-j, k\neq 0}^{J-j}\!\frac{1}{|x_k|^{1+\a}},
    \end{aligned}
\end{equation*}
then by the inequality (\ref{MPcondition0}), for $x_j=jh$,  we have
\begin{equation}
\begin{aligned}
\label{L}
   \hat{ L }&\leq  c(x_j) \delt \left[\int_{-\infty}^{-1-x_j} \frac{\dy}{|y|^{1+\a} } +  \int_{1-x_j}^{\infty} \frac{\dy}{|y|^{1+\a} }
    +\frac{2h}{h^{1+\a}}+\int_{\left(-1-x_j+\frac{h}{2},1-x_j-\frac{h}{2}\right)\backslash (-h,h) } \frac{\dy}{|y|^{1+\a} } \right]\\
    &-2\delt c(x_j)\zeta(\a-1) h^{-\a} \\
    &\leq c(1,\a)|\s(x_j)|^\a \delt \left[\frac{2}{h^\a}+2\int_h^\infty \frac{\dy}{|y|^{1+\a} }\right]-2\delt c(x_j)\zeta(\a-1) h^{-\a}\\
    &\leq \frac{2c(1,\a)|\s(x_j)|^\a \delt }{h^\a}\left(1+\frac{1}{\a}\right)-2\delt c(x_j)\zeta(\a-1) h^{-\a}\\
    &\leq \frac{2c(1, \a)\widetilde{M}^{\a}\delt}{h^{\a}}\left[1+\frac{1}{\a}-\zeta(\a-1)\right].\\
    &\leq 1.
\end{aligned}
\end{equation}
We can rewritten \eqref{uj} as
\begin{equation}
\begin{aligned}
 U_j^{n+1} =&\left(1-\hat{L}\right) U_j^n - c(x_j)\delt\zeta(\a-1)h^{-\a} (U_{j-1}^n+U_{j+1}^{n})
            + c(x_j)\delt h \sum\limits_{k=-J-j, k\neq 0}^{J-j} \frac{U_{j+k}^n}{|x_k|^{1+\a}}\\
             & \leq \left[1-\hat{L}-2c(x_j)\delt \zeta(\a-1) h^{-\a}+c(x_j)\delt h \sum\limits_{k=-J-j, k\neq 0}^{J-j}\!\frac{1}{|x_k|^{1+\a}}  \right]M.
\end{aligned}
\end{equation}

Due to (\ref{L}) and the coefficient $1-\hat{L}$ of $U^{n}_j$ is nonnegative, we see that the coefficients of $U$ are all nonnegative, then we have
\begin{equation*}
\begin{aligned}
U_j^{n+1} \leq & \Bigg\{1- \frac{c(x_j)\delt}{\a}  \bigg[\frac{1}{(1+x_j)^\a} +\frac{1}{(1-x_j)^\a}\bigg] \Bigg\}M \\
  \leq& M.
\end{aligned}
\end{equation*}

A.2. $\mathbf{Proof ~~of ~~Proposition  ~~3.3}$. Set $e^{n}_{j}=U^{n}_{j}-u(x_j, t_n)$, where $u(x_j,t_n)$ is the analytic solution at the point $(x_j,t_n)$, then we have
\begin{equation}
\begin{aligned}
e^{n+1}_{j}-e^{n}_{j}&=&-c(1,\alpha)\zeta(\alpha-1)|\sigma(x_j)|^{\alpha}h^{2-\alpha}\Delta t\frac{e^{n}_{j-1}-2e^{n}_{j}+e^{n}_{j+1}}{h^2} \\
 &&+\Delta tc(1,\alpha)h|\sigma(x_j)|^{\alpha}\sum^{J-j}_{k=-J-j, k\neq 0}\frac{e_{j+k}^n-e_j^n}{|x_k|^{1+\alpha}}
-\Delta t T^{n}_{j},
\end{aligned}
\end{equation}
where
\iffalse
\begin{equation}
\begin{aligned}
&E^{n}_{j}=c(1,\alpha)\zeta(\alpha-1)|\sigma(x_j)|^{\alpha}h^{2-\alpha}\frac{u(x_{j-1},t_n)-2u(x_{j},t_n)+u(x_{j+1},t_n)}{h^2}\\
&-c(1,\alpha)h|\sigma(x_j)|^{\alpha}\sum^{J-j}_{k=-J-j, k\neq 0}\frac{u(x_{j+k},t_n)-u(x_j,t_n)}{|x_k|^{1+\alpha}}+\frac{u(x_j, t_{n+1})-u(x_j, t_{n})}{\Delta t}\\
&\approx c(1,\alpha)\zeta(\alpha-1)|\sigma(x_j)|^{\alpha}h^{2-\alpha}\left[u_{xx}(x_j,t_n)+\frac{1}{12}u_{xxxx}(x_j,t_n)h^2\right]\\
&-c(1,\alpha)h|\sigma(x_j)|^{\alpha}\sum^{J-j}_{k=-J-j, k\neq 0}\frac{u(x_{j+k},t_n)-u(x_j,t_n)}{|x_k|^{1+\alpha}}\\
&+u_{t}(x_j, t_n)+\frac{1}{2}u_{tt}(x_j, t_n)\Delta t.
\end{aligned}
\end{equation}
\fi

\begin{equation}
\begin{aligned}
&T^{n}_{j} = \frac{1}{2}u_{tt}(x_j, t_n)\Delta t+\frac{1}{12}c(1,\alpha)\zeta(\alpha-1)|\sigma(x_j)|^{\alpha}h^{4-\alpha}u_{xxxx}(x_j,t_n)\\
&-\frac{1}{6}c(1,\alpha)\zeta(\alpha-3)|\sigma(x_j)|^{\alpha}h^{4-\alpha}u_{xxxx}(x_j,t_n)
-\frac{B_2}{2}c(1,\alpha)|\sigma(x_j)|^{\alpha}\frac{\partial}{\partial y}\left(\frac{u(x_j+y, t_n)-u(x_j, t_n)}{|y|^{1+\alpha}}\right)\Big|^{y=L-x_j}_{y=-L-x_j}\\
&+c(1,\alpha)|\sigma(x_j)|^{\alpha}\int_{\{-\infty,-L-x_j\}\bigcup \{L-x_j, \infty\}}\frac{u(x_j+y,t_n)-u(x_j,t_n)}{|y|^{1+\alpha}}dy+\cdots,
\end{aligned}
\end{equation}
where $ \zeta(\tau)$ is the Riemann zeta function initially defined for $\mathbb{R}e \tau > 1$ by $\zeta(\tau)=\sum_{k=1}^{\infty}k^{-\tau}$, $B_{2}$ are the Bernoulli numbers \cite{as}.

Obviously, we have
\begin{equation}
|T^{n}_j|\leq O(\Delta t)+ O(h^2)+O(L^{-\alpha}):=\bar{T}.
\end{equation}
Therefore, the truncation error is uniformly bounded.

By the condition \eqref{MPcondition}, we have
\begin{equation}
\max |e^{n+1}_j|\leq \max |e^{n}_j|+\Delta  t \bar{T}\leq n\Delta  t \bar{T},
\end{equation}
Thus, the convergence is proved.

A.3. $\mathbf{Proof ~~of ~~Proposition  ~~4.1}$.

Note that $C_p, C_n\geq 0$, then $C_{1,\beta} \geq 0$.
Assume $(j^*,t^*) \in I_{h,T}$ is the maximum point, then we have
$
\left(A\mathbf{U}\right)_{j^*}:= C_h\frac{U_{j^*+1}-2U_{j^*}+U_{j^*-1}}{h^2} +
\hat{M}(x_{j^*}) \delta_u U_{j^*}+\hat{N} (U_{j^*}-U_J) \leq 0
$
as $U_J=0$.
Due to $C_{1,\beta}, C_{2,\beta} \geq 0$, we have  $(B\mathbf{U})_{j^*} \leq 0$.
By the similar technique from the fifth step to prove Proposition 2  in the reference \cite{Xiao2019}, we get the required result.

\section*{Reference}


\begin{thebibliography}{0}
\bibitem{Dit}
P. D. Ditlevsen.
Observation of $\alpha$-stable noise induced millennial climate changes from an ice record.
Geophysical Research Letters,
26 (1999) 1441-1444.



\bibitem{Bli}
B. Lisowski, D. Valenti, B. Spagnolo, M. Bier, E. Gudowska-Nowak.
Stepping molecular motor amid L\'evy white noise.
Physical Review E Statistical Nonlinear and Soft Matter Physics,
91 (2015) 042713.


\bibitem{Lin}
Y. T. Lin, C. R. Doering.
Gene expression dynamics with stochastic bursts: Construction
and exact results for a coarse-grained model.
Physical Review E,
93 (2016) 022409.






\bibitem{Yong17}
Y. Li,  Y. Xu,  J. Kurths.
Roughness-enhanced transport in a tilted ratchet driven by L\'evy noise.
Physical Review E,
96 (2017) 052121.


\bibitem{Ale}
A. V. Milovanov,  J. J. Rasmussen.
L\'evy flights on a comb and the plasma staircase,
Physical Review E,
98 (2018) 022208.





%\bibitem{Hum}
%N. E. Humphries, N. Queiroz, J. R. M. Dyer, N. G. Pade, M. K. Musyl, K. M. Schaefer, D. W. Fuller,
%J. M. Brunnschweiler, T. K. Doyle, J. D. R. Houghton, G. C. Hays, C. S. Jones, L. R. Noble, V. J. Wearmouth, E. J. Southall, D. W. Sims.
%Environmental context explains \textcolor[rgb]{0,0,1}{L\'evy} and Brownian movement patterns of marine predators.
%Nature, 465(2010)  066-1069.


\bibitem{Hum}
N. E. Humphries, N. Queiroz, J. R. M. Dyer et al.
Environmental context explains {L\'evy} and Brownian movement patterns of marine predators.
Nature, 465(2010)  066-1069.







\bibitem{Jac}
K. R. Jackson, S. Jaimungal.
Option pricing with regime switching {L\'evy} processes using Fourier space time stepping.
Iasted International Conference on Financial Engineering and Applications, 2007.





%\bibitem{Ame}
%A. M. Edwards, R. A. Phillips, N. W. Watkins, M.P. Freeman, E. J. Murphy, V. Afanasyev, S. V. Buldyrev, M. G. E. da Luz,
%E. P. Raposo, H. E. Stanley, G. M. Viswanathan.
%Revisiting L\'evy flight search patterns of wandering albatrosses, bumblebees and deer.
%Nature, 449 (2007) 1044-1048.


\bibitem{Aj18}
A. V. Milovanov, J. J. Rasmussen.
L\'evy flights on a comb and the plasma staircase.
Physical Review E,
98 (2018) 022208.




\bibitem{Wan16}
Z. Wang,  Y. Xu,  H. Yang.
L\'evy noise induced stochastic resonance in an FHN model.
Science China Technological Sciences,
59 (2016) 371-375.


\bibitem{Zqw}
Z. Wang,  Y. Xu,  Y. Li, J. Kurths.
$\alpha$-stable noise-induced coherence on a spatially extended Fitzhugh-Nagumo system.
Journal of Statistical Mechanics: Theory and Experiment,
(2019) 103501.


\bibitem{Ls}
L. Serdukova,  Y. Zheng,  J. Duan, J. Kurths.
Stochastic basins of attraction for metastable states.
Chaos,
26 (2016) 073117.


\bibitem{Md}
M. Djeddi,  M. Benidir.
Robust polynomial Wigner-Ville distribution for the analysis of polynomial phase signals in $\alpha$-stable noise.
IEEE International Conference on Acoustics, Speech, and Signal Processing,
2 (2004) 613-616.


\bibitem{Ml}
M. L. de Freitas, M. Egan, L. Clavier, A. Goupil, G. W. Peters, N. Azzaoui.
Capacity bounds for additive symmetric $\alpha$-stable noise channels.
IEEE Transactions on Information Theory,
63 (2017) 5115-5123.





\bibitem{Am}
A. Mahmood, M. Chitre, M. Armand.
On single-carrier communication in additive white symmetric $\alpha$-stable noise.
IEEE Transactions on Comunications,
62 (2014) 3584-3599.



\bibitem{Ts10}
T. Srokowski.
Nonlinear stochastic equations with multiplicative L\'evy noise.
Physical Review E,
81 (2010) 051110.



\bibitem{Ts12}
T. Srokowski.
Multiplicative L\'evy noise in bistable systems.
The European Physical Journal B,
85 (2012) 65.





\bibitem{Ts13}
T. Srokowski.
Fluctuations in multiplicative systems with jumps.
Physical Review E,
87 (2013) 032104.





\bibitem{Apple}
D. Applebaum.
L\'evy Processes and Stochastic Calculus.
Cambridge University Press, U.K., 2009.



%\bibitem{Sato}
% K. I. Sato.
%L{\'e}vy Processes and Infinitely Divisible Distributions.
%John Wiley \& Sons, Inc., 2011.




\bibitem{Yxw19}
 Y. Xu, W. Zan, W. Jia, J. Kurths.
Path integral solutions of the governing equation of SDEs excited by L\'evy white noise.
Journal of Computational Physics,
394 (2019) 41-55.




\bibitem{Sune}
S. Jespersen,  R. Metzler,  H. C. Fogedby.
L\'evy flights in external force fields: Langevin and fractional Fokker-Planck equations and their solutions,
Physical Review E,
59 (1999) 2736.




\bibitem{ds}
D. Schertzer, M. Larchev\^eque, J. Duan, V. V. Yanovsky, S. Lovejoy.
Fractional {F}okker-{P}lanck equation for nonlinear stochastic differential equations driven by non-Gaussian L\'evy stable noises.
Journal of Mathematical Physics,
42 (2001) 200-212.




\bibitem{Sun17}
X. Sun, J. Duan.
Derivation of Fokker-Planck equations for stochastic systems under excitation of multiplicative non-Gaussian white noise.
Journal of Mathematical Analysis and Applications,
446 (2017) 786-800.








\bibitem{AV}
A. V. Chechkin, V. Yu. Gonchar.
Linear relaxation processes governed by fractional symmetric kinetic equations.
Journal of Experimental and Theoretical Physics,
91 (2000) 635-651.






\bibitem{AJ14}
A. V. Milovanov, J. J. Rasmussen.
A mixed SOC-turbulence model for nonlocal transport and
L\'evy-fractional Fokker-Planck equation.
Physics Letters A,
378 (2014) 1492-1500.



\bibitem{AJ18}
A. V. Milovanov, J. J. Rasmussen.
L\'evy flights on a comb and the plasma staircase.
Physical Review E,
98 (2018) 022208.



\bibitem{De09}
S. I. Denisov, W. Horsthemke, P. H\"anggi.
Generalized Fokker-Planck equation: Derivation and exact solutions.
The Eupropean Physical Journal B,
68 (2009) 567-575.






\bibitem{Duan}
J. Duan.
An Introduction to Stochastic Dynamics.
Cambridge University Press, UK, 2015.



\bibitem{Ting}
T. Gao, J. Duan, X. Li.
Fokker-planck equations for stochastic dynamical systems with symmetric L{\'e}vy process.
Applied Mathematics and Computation,
278 (2016) 1-20.


\bibitem{Mct}
M. M. Meerschaert, C. Tadjeran.
Finite difference approximations for two-sided space-fractional partial differential equations.
Applied Numerical Mathematics,
56 (2006) 80-90.

\bibitem{Huang}
Y. Huang, A. Oberman.
Numerical methods for the fractional {L}aplacian: A finite difference-quadrature approach.
Siam Journal on Numerical Analysis,
52 (2014) 3056-3084.




\bibitem{Liu}
 F. Liu, V. Anh, I. Turner.
Numerical solution of the space fractional {F}okker-{P}lanck equation.
Journal of Computational and Applied Mathematics,
166 (2004) 209-219.




\bibitem{Yft}
Q. Yang, F. Liu, I. Turner.
Numerical methods for fractional partial differential equations with Riesz space fractional derivatives.
Applied Mathematical Modelling,
34 (2010) 200-218.





\bibitem{Rhw}
R. Ren, H. Li, W. Jiang, M. Song.
An efficient Chebyshev-tau method for solving the space fractional diffusion equations.
Applied Mathematics and Computation,
224 (2013) 259-267.





\bibitem{Yxu}
Y. Xu, W. Zan, W. Jia, J. Kuths.
Path integral solutions of the governing equation of SDEs excited by L\'evy white noise.
Journal of Computational Physics,
394 (2019) 41-55.



\bibitem{spss}
S. Popa, S. S. Sritharan.
Nonlinear filtering of It\^o-L\'evy stochastic differential equations with continuous observations.
Communications on Stochastic Analysis,
3 (2009)313-330.


%\bibitem{yan}
%Y. Zhang, H. Qiao and J. Duan.
%Effective filtering analysis for non-Gaussian dynamic systems.
%Applied Mathematics and Optimization,
%79 (2019)1-21.



\bibitem{Hjk12}
H. J. Kushner.
Weak convergence methods and singularly perturbed stochastic control and filtering problems.
Spring, 2012.



\bibitem{zhang}
Y. Zhang, Z. Cheng, X. Zhang, X. Chen, J. Duan, X. Li.
Data assimilation and parameter estimation for a multiscale stochastic system with alpha-stable L\'evy noise.
Journal of Statistical Mechanics Theory and Experiment,
2017(2018): 113401.


\bibitem{qiao}
H. Qiao, J. Duan.
Nonlinear filtering of stochastic dynamical systems with L\'evy noises.
Advances in Applied Probability,
47 (2015) 902-918.


\bibitem{Wu}
S. Albeverio, Z. Brze\'zniak, J. L. Wu.
Existence of global solutions and invariant measures for stochastic differential equations driven by Poisson type noise with non-Lipschitz
coefficients.
Journal of Mathematical Analysis and Applications,
371 (2010): 309¨C322.

%
%
%
\bibitem{Wang}
M. Wang, J. Duan.
Existence and regularity of a linear nonlocal Fokker-Planck equation with growing drift.
Journal of Mathematical Analysis Applications, 449 (2017) 228-243.





\bibitem{yu}
T. Yu, L. Zhang, M. Luo.
Stochastic resonance in the fractional Langevin equation driven by multiplicative noise and periodically modulated noise.
Physica Scripta, 88 (2013) 045008.



%\bibitem{xr}
%X. Rosoton.
%Nonlocal elliptic equations in bounded domains: a survey.
%Publicacions Matematiques, 60 (2016) 175-214.



\bibitem{as}
A. Sidi, M. Israeli.
Quadrature methods for periodic singular and weakly singular fredholm integral equations.
Journal of Scientific Computing, 3 (1998) 201-231.






%\bibitem{first}
%T. Korena, A.V. Chechkin, J. Klafter.
%On the first passage time and leapover properties of {L}{\'e}vy motions.
%Physica A, 379 (2007): 10-22.






%\bibitem{Chen2016}
%Z. Chen, X. Zhang.
%Heat kernels and analyticity of non-symmetric jump diffusion semigroups.
%Probability Theory and Related Fields, 165 (2016) 267--312.






%\bibitem{Riabiz}
% M. Riabiz, S. Godsill.
%Approximate simulation of linear continuous time models driven by asymmetric stable {L}{\'e}vy processes.
%IEEE International Conference on Acoustics, 2017.
%
%
%\bibitem{Zeng10}
% L. Zeng, B. Xu.
%Effects of asymmetric {L}{\'e}vy noise in parameter-induced aperiodic stochastic resonance.
%Physica A, 389 (2010): 5128-5136.



\bibitem{Xiao2019}
 X. Wang, W. Shang, X. Li, J. Duan, Y. Huang.
Fokker-Planck equation driven by asymmetric L\'evy motion.
Advances in Computational Mathematics, 45 (2019) 787-811.


\bibitem{qiao16}
Q. Huang, J. Duan, J. Wu.
Maximum principles for nonlocal parabolic Waldenfels operators.
Bulletin of Mathematical Sciences, 9 (2018) 1-31.



\bibitem{wt}
W. T. Coffey, Yu. P. Kalmykov, J. T. Waldron.
The Langevin equation: with applications to stochastic problems in physics, chemistry, and electrical engineering.
World Scientific, USA, 2004.










\bibitem{bao}
R. Archibald, F. Bao, P. Maksymovych.
{Backward sde filter for jump diffusion processes and its applications in material sciences}.
arXiv:1805.11038.








\bibitem{os}
B. {\O}ksendal, A. Sulem.
Applied Stochastic Control of Jump Diffusions.
Spring, 2007.


\bibitem{fra}
F. Gland.
Time discretization of nonlinear filtering equations.
Proceedings of the 28th Conference on Decision and Control, 1989.



\bibitem{nm}
N. Morgenstern, N. Lorente, K. H. Rieder.
Controlled manipulation of single atoms and small molecules using the scanning tunnelling microscope.
Physica Status Solidi B, 250 (2013) 1671-1751.

\end{thebibliography}
\end{document}